\documentclass[12pt]{amsart}
\UseRawInputEncoding
\usepackage{amssymb}
\usepackage{cases}
\usepackage{bbm}
\usepackage{xcolor}
\usepackage{mathrsfs}
\usepackage{graphicx}
\usepackage{amsfonts}
\usepackage{enumerate}
\usepackage{marginnote}
\usepackage{enumitem} 
\usepackage{blindtext} 
\allowdisplaybreaks[3]
\usepackage{thm-restate}
\usepackage{comment}
\usepackage{amsmath, amsthm}
\usepackage{amssymb,  amsmath,  amsfonts,  latexsym}
\usepackage{enumerate}
\usepackage{mathtools}
\allowdisplaybreaks[3]

\newtheorem{thm}{Theorem}[section]

\newtheorem{lem}[thm]{Lemma}
\newtheorem{prop}[thm]{Proposition}
\newtheorem{example}[thm]{Example}
\newtheorem{remarks}[thm]{Remark}
\newtheorem{defn}[thm]{Definition}
\newtheorem{hyp}[thm]{Hypothesis}
\newtheorem{assumption}[thm]{Assumption}
\numberwithin{equation}{section}
\usepackage{hyperref}
\hypersetup{
    colorlinks=true,   
    linkcolor=blue,    
    citecolor=blue,    
    urlcolor=blue      
}

\date{}

\def\<{\langle}
\def\>{\rangle}

\def\d"{^{\prime\prime}}

\def\bhyp{\begin{hyp}}
\def\nhyp{\end{hyp}}

\def\beq{\begin{equation}}
\def\nneq{\end{equation}}

\def\bdef{\begin{defn}}
\def\ndef{\end{defn}}

\def\bthm{\begin{thm}}
\def\nthm{\end{thm}}

\def\bprop{\begin{prop}}
\def\nprop{\end{prop}}

\def\brmk{\begin{remarks}}
\def\nrmk{\end{remarks}}

\def\bexa{\begin{example}}
\def\nexa{\end{example}}

\def\blem{\begin{lem}}
\def\nlem{\end{lem}}

\def\bcor{\begin{cor}}
\def\ncor{\end{cor}}

\def\bexe{\begin{exe}}
\def\nexe{\end{exe}}

\def\bprf{\begin{proof}}
\def\nprf{\end{proof}}

\def\bdes{\begin{description}}
\def\ndes{\end{description}}

\title[Long term convergence rate of  S-K approximation by Stein's method]
{Long term convergence rate of  Smoluchowski-Kramers approximation by Stein's method}
\author{Shiyu Liu}
\address{Shiyu Liu,  School of Mathematics and Statistics,  Wuhan University,  Wuhan,  Hubei 430072,  PR China.}
\thanks{The first author and the second author are supported by NSFC  12471143.}
\email{syliu\_math@whu.edu.cn}

\author{Wei Liu}
\address{Wei Liu,  School of Mathematics and Statistics,  Wuhan University,  Wuhan,  Hubei 430072,  PR China.}
\email{wliu.math@whu.edu.cn}
\date{}

\author{Lihu Xu}
\address{Lihu Xu, 1. Department of Statistics and Probability, Michigan State University, East Lansing, MI 48824, USA； 2. Department of Mathematics,  Faculty of Science and Technology,  University of Macau,  Av. Padre
	Tom\'as Pereira,  Taipa,  Macau,  China}
\email{xulihu@msu.edu, lihuxu@um.edu.mo}

\begin{document}

\begin{abstract} 
	We consider the following second-order stochastic differential equation on $\mathbb{R}^{2d}$:
	\begin{equation*}
		\begin{cases}
			dX_t^m=Y_t^mdt, \\
			mdY_t^m=b(X_t^m)dt+\sigma(X_t^m)dB_t-Y^m_tdt, 
		\end{cases}
	\end{equation*}
where  $X^m_t$ and $Y^m_t$  represent the position and velocity of a particle at time $t$, $m>0$ denotes its mass, $b:\mathbb{R}^d \rightarrow \mathbb{R}^d$ is the drift field, $\sigma:\mathbb{R}^d \rightarrow \mathbb{R}^{d \times d}$ is the diffusion coefficient, and $\{B_t\}_{t \ge 0}$ is a $d$-dimensional standard Brownian motion. 

The Smoluchowski--Kramers approximation states that as 
$m \rightarrow 0$, this system converges to the limiting equation: 
\begin{equation*}
			dX_t=b(X_t)dt+\sigma(X_t)dB_t.
	\end{equation*}
Utilizing Stein's method, we prove that the $1$-Wasserstein distance between the invariant distribution of $X_t^m$ and that of its small-mass limit $X_t$ is of order $O(\sqrt{m}|\ln m|).$  Particularly, in the one-dimensional case, the convergence rate can be improved to $O(\sqrt{m}).$
\end{abstract}
\maketitle

\vskip 20pt\noindent {\it AMS 2010 Subject classifications.} Primary: 60H10,  60F05; Secondary: 60J60.
\vskip 20pt\noindent {\it Key words and Phrases.} Smoluchowski-Kramers approximation,  Stein's method,  1-Wasserstein distance, Schauder interior estimates.


\section{Introduction}\label{se1}
\subsection{Background}

Consider a particle of mass $m$ moving in a force field $b(x)+\sigma(x)\dot{W}_t$ subject to velocity-proportional friction.   Its dynamics are governed by the following system of kinetic stochastic differential equations  (SDEs in short):
\begin{equation}\label{EQ1}
	\begin{cases}
		dX_t^m=Y_t^mdt, \\
		mdY_t^m=b(X_t^m)dt+\sigma(X_t^m)dB_t-Y^m_tdt, 
	\end{cases}
\end{equation}
where $b:\mathbb{R}^d\to\mathbb{R}^d, \sigma:\mathbb{R}^d\to\mathbb{R}^{d\times d}$,  $X_t^m$ and $Y_t^m$ denote the particle's position and velocity, respectively, while $B_t$ is a $d-$dimensional Brownian motion. This type of kinetic stochastic dynamics has been extensively studied in recent years, see for instance \cite{EGZ2019,GLWZ,GMZ2021,GWXZ2026,GuWa2012} and the references therein. 
When the mass $m$ is sufficiently small, the system can be  approximated by the following SDE:
\begin{equation}\label{EQ2}
dX_t=b(X_t)dt+\sigma(X_t)dB_t, 
\end{equation}
which is knowed as the Smoluchowski-Kramers approximation (or the small mass limit). 

The Smoluchowski-Kramers approximation has been extensively studied in the literature. Nelson \cite{N} proved that when $b$ is Lipschitz continuous and $\sigma$ is constant,  $X_t^m$ converges to $X_t$  almost surely, uniformly on  compact time intervals. Subsequently, Freidlin et al. \cite{F}  showed that for Lipschitz coefficients  $b$  and $\sigma,$  for any $T>0$ and $\delta>0, $
\begin{align*}
	\lim\limits_{m\rightarrow0}\mathbb{P}(\sup_{t\in[0, T]}|X_t^m-X_t|>\delta)=0.
\end{align*}

A substantial body of work has been devoted to estimating the convergence rate of this approximation. In particular, \cite{DNB, XY} examined convergence under non-Lipschitz coefficients, while \cite{fbm1,fbm3,fbm2} extended the framework to systems driven by fractional Brownian motion. The asymptotic behavior of the velocity component in \eqref{EQ1} has been analyzed in \cite{NK1,NK2,NK3}. Quantitative convergence rates have been obtained in various metrics: 
Berry--Esseen bounds were derived in \cite{BRJ}, and convergence in total variation distance was studied in \cite{BRJ1}. For applications of the approximation method, we refer to \cite{C,HDC,HMVW}. Significant results have also been achieved for infinite-dimensional stochastic partial differential equations; see \cite{SMF,JFA,LDP1,SPDE4,SPDE6,SPDE5}.

While most existing results focus on finite-time intervals, Cerrai et al. \cite{SMF, JFA} extended the analysis to long-time behavior in the setting of stochastic partial differential equations, proving convergence in distribution without obtaining an explicit rate.

In this paper, we investigate the stationary solutions of systems \eqref{EQ1} and \eqref{EQ2}. Our primary contributions are summarized as follows:
\begin{itemize}[leftmargin=*]
\item Under the assumptions stated below, both equations \eqref{EQ1} and \eqref{EQ2} admit unique invariant measures. Let $\nu_m$ denote the marginal distribution (with respect to the position variable) of the invariant measure of the kinetic process $\{(X_t^m,Y_t^m)\}_{t\geq 0}$, and let $\nu$ be the invariant measure of the limiting diffusion $\{X_t\}_{t\geq 0}$. We prove that: 
$$\mathbb{W}_1(\nu_m,\nu) \le C \sqrt{m} |\log m|, \ \ \ \ d>1,$$
$$\mathbb{W}_1(\nu_m,\nu) \le C \sqrt{m},\ \ \ \ d=1,$$
where $\mathbb{W}_1$ denotes the 1-Wasserstein distance. To the best of our knowledge, there are currently no existing results quantifying the convergence rate between the invariant measures of the second order equation \eqref{EQ1} and its limiting equation \eqref{EQ2} as $m \rightarrow 0$.
\item To derive the convergence rates above, we generalize the Stein's method framework developed in \cite{FSX}. This generalization is robust and can be extended to similar approximation problems where Brownian motion is replaced with stable noise. The distinction between the one-dimensional and general $d$-dimensional cases arises from the differing regularity properties of the associated Stein equations in these settings.
\end{itemize}

Stein's method was originally introduced in \cite{Stein1972} and has since evolved into a powerful technique for probability approximations. It has been widely applied across probability theory, as evidenced by \cite{stein0, DeTe2023, FaKo2021,stein1, stein2, stein4}.


The remainder of this paper is organized as follows. We conclude Section~\ref{se1} by introducing the necessary notation and stating the main theorems. Section~\ref{se3} presents the auxiliary lemmas, while Section~\ref{se4} provides the proofs of the main theorems. Finally, the technical proofs for the results in Section~\ref{se3} are distributed as follows: Section~\ref{seerg} establishes Propositions~\ref{prop2} and~\ref{egeq2}; Section~\ref{se5} proves Lemmas~\ref{XMYMesti} and \ref{MOMX}; and Section~\ref{se66} demonstrates Propositions~\ref{regularity} and~\ref{logf}.

\subsection{Notation}
The inner product of verctors $x, y\in\mathbb{R}^d$ is denoted by $\langle x, y\rangle$ and {the Eulidean norm of $x \in \mathbb{R}^d$  by $|x|$}. For matrices $A,B\in\mathbb{R}^{d\times d},$ their Hilbert-Schmidt inner product is $\langle A, B\rangle_{\rm HS}:={\rm Tr}(A^TB)$  where $^\top$ denotes the transpose operation, and the associated norm is defined by $||A||^2:=\langle A,A\rangle_{\rm HS}$. 

Let $C_c^\infty(\mathbb{R}^d, \mathbb{R})$ denote the set of smooth functions from $\mathbb{R}^d$ to $\mathbb{R}$ with compact support. 
The class of all continuous functions from  $\mathbb{R}^d$ to $\mathbb{R}$ is denoted by $C(\mathbb{R}^d, \mathbb{R})$ , and for an integer $k \geq 1$, $C^k(\mathbb{R}^d, \mathbb{R})$ denotes the space of functions whose derivatives up to order $k$ are continuous.

For $f \in C^2(\mathbb{R}^d, \mathbb{R})$ and $u, u_1, u_2, x \in \mathbb{R}^d$, the directional derivative $\nabla_u f(x)$ and $\nabla_{u_2} \nabla_{u_1} f(x)$ are respectively defined by
\begin{align*}
\nabla_u f(x) &= \lim_{\epsilon \to 0} \frac{f(x + \epsilon u) - f(x)}{\epsilon}, \\
\nabla_{u_2} \nabla_{u_1} f(x) &= \lim_{\epsilon \to 0} \frac{\nabla_{u_1} f(x + \epsilon u_2) - \nabla_{u_1} f(x)}{\epsilon}.
\end{align*}
As usual, $\nabla f(x) \in \mathbb{R}^d$, $\nabla^2 f(x) \in \mathbb{R}^{d \times d}$, and $\Delta f(x) \in \mathbb{R}$ denote the gradient, the Hessian, and the Laplacian of $f$, respectively. It is known that $\nabla_u f(x) = \langle \nabla f(x), u \rangle$ and $\nabla_{u_2} \nabla_{u_1} f(x) = \langle \nabla^2 f(x), u_1 u_2^\top \rangle_\text{HS}$. 

We write $C_p(\mathbb{R}^d, \mathbb{R})$ for the set of all continuous functions with polynomial growth, and $C^2_p(\mathbb{R}^d, \mathbb{R})$ for those functions in $C^2(\mathbb{R}^d, \mathbb{R})$ whose value, first derivative, and second derivative all have polynomial growth.


Next we introduce the Wasserstein distance used in this paper. Let $\mathcal{P}$ denote the set of  probability measures on $\mathbb{R}^d$ with finite first-order moment. For a given random variable $X$, we write $\mathcal{L}(X)$ for its law.  
\begin{defn}[1-Wasserstein distance] For any $\mu, \nu\in\mathcal{P}$, the 1-Wasserstein distance is defined as 
	\begin{equation*}
		\mathbb{W}_1(\mu, \nu):=\inf\limits_{\pi\in\Pi(\mu, \nu)}\int_{\mathbb{R}^d\times\mathbb{R}^d}|x-y|\pi(dx, dy),  
	\end{equation*}
	where $\Pi(\mu, \nu)$ is the set of all couplings of $\mu$ and $\nu$, that is, probability measures on $\mathbb{R}^d\times\mathbb{R}^d$ with marginals $\mu$ and $\nu.$
\end{defn}
By Kantorovich duality, the $1-$Wasserstein distance admits the dual representation:
\begin{equation}
	\mathbb{W}_1(\mu, \nu)=\sup\limits_{h\in{\rm Lip}_0(1)}\left\{\int_{\mathbb{R}^d}h(x)\mu(dx)-\int_{\mathbb{R}^d}h(x)\nu(dx)\right\}, 
\end{equation}
where ${\rm Lip}_0(1):=\left\{h:\mathbb{R}^d\rightarrow\mathbb{R}||h(x)-h(y)|\leq|x-y| \text{ and } h(0)=0\right\}.$

\subsection{Assumptions and main results}


We make the following assumptions:
\begin{assumption}\label{A0}
	  ({\bf Lipschitz coefficients}) There exists a  constant $L>0$ such that for any $x,y\in\mathbb{R}^d,$
    \begin{align*}
    |b(x)-b(y)|+||\sigma(x)-\sigma(y)||\leq L|x-y|.
    \end{align*}
    This Lipschitz assumption implies, in particular, that the drift has at most linear growth: there exists a constant $L_b>0$ such that for every $x\in\mathbb{R}^d,$ $$|b(x)|^2\leq L_b(1+|x|^2).$$
\end{assumption}

\begin{assumption}\label{A1}
	({\bf Uniformly bounded ellipticity}) there exists a constant $\sigma_0>0$ such that for any $x\in\mathbb{R}^d,$
 \begin{align*}
     \sigma(x)\sigma^T(x)\geq\sigma_0I_d,
 \end{align*}
 where $I_d$ denotes the $d$-dimensional identity matrix. The uniform matrix norm of $\sigma$ is finite, i.e. $$||\sigma||_\infty:=\sup\limits_{x\in\mathbb{R}^d}||\sigma(x)||<\infty.$$
 \end{assumption}

\begin{assumption}\label{A2}
 ({\bf Dissipative condition}) There exist two positive constants $c_1,c_2$ such that  for any $x\in\mathbb{R}^d, $
	\begin{align*}
		\langle x, &b(x)\rangle\leq c_1-c_2|x|^2.
\end{align*}
\end{assumption}


Under the preceding Assumptions \ref{A0}, \ref{A1}, and \ref{A2}, both SDEs \eqref{EQ1} and \eqref{EQ2} are ergodic and admit unique invariant probability measures, denoted respectively by $\pi_m$ and $\nu$ (see Propositions \ref{prop2} and \ref{egeq2} below).

Let $\nu_m(dx)=\pi_m(dx,\mathbb{R}^d)$ be the marginal distribution of $\pi_m$ with respect to he position variable $x$. Our main results as follows:
\begin{thm}\label{thm1}
	Let Assumptions \ref{A0},  \ref{A1} and \ref{A2} hold. Then there exists a constant $C>0$ such that for any $m\leq\min\left\{\frac{c_2}{2L_b}, \frac{2}{c_2}, \frac{1}{e}\right\}, $
	\begin{align*}
		\mathbb{W}_1(\nu_m, \nu)\leq C\sqrt{m}|\ln m|.
	\end{align*}
\end{thm}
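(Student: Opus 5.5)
Our approach is Stein's method, built on the generator $\mathcal{A}f(x)=\langle b(x),\nabla f(x)\rangle+\frac12\langle \sigma\sigma^\top(x),\nabla^2 f(x)\rangle_{\rm HS}$ of the limiting SDE \eqref{EQ2}. Fix $h\in{\rm Lip}_0(1)$ and let $f_h$ solve the Stein (Poisson) equation
\[
\mathcal{A}f_h(x)=h(x)-\nu(h),
\]
given by $f_h(x)=-\int_0^\infty(\mathbb{E}h(X^x_t)-\nu(h))\,dt$. The exponential ergodicity of \eqref{EQ2} (Proposition~\ref{egeq2}) makes this integral converge, with $|f_h(x)|\le C(1+|x|)$ and $\|\nabla f_h\|_\infty\le C$. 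We then have $\nu_m(h)-\nu(h)=\mathbb{E}\,\mathcal{A}f_h(X^m)$, where $(X^m,Y^m)\sim\pi_m$, and the task reduces to bounding this quantity uniformly in $h$.

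To compare with the kinetic system, we apply Itô's formula under stationarity to test functions of $(x,y)$. Take $g(x,y)=f_h(x)+m\langle \nabla f_h(x),y\rangle$. Its kinetic generator is
\[
\mathcal{L}_mg=\langle y,\nabla f_h\rangle+m\langle \nabla^2 f_h\,y,y\rangle+\langle b-y,\nabla f_h\rangle,
\]
in which the $y$-terms cancel. Stationarity gives $\mathbb{E}\mathcal{L}_mg=0$, hence
\[
\mathbb{E}\langle b(X^m),\nabla f_h(X^m)\rangle=-m\,\mathbb{E}\langle\nabla^2 f_h(X^m)Y^m,Y^m\rangle.
\]
It follows that
\[
\mathbb{E}\mathcal{A}f_h(X^m)=\mathbb{E}\Big\langle \tfrac12\sigma\sigma^\top(X^m)-mY^m(Y^m)^\top,\nabla^2f_h(X^m)\Big\rangle_{\rm HS}.
\]
Next we use $\phi(x,y)=m^2\,y^\top\nabla^2f_h(x)\,y$, or more simply the matrix-valued quantity $y\otimes y$ weighted by $\nabla^2 f_h(x)$. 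Its generator produces $-2m\,\langle yy^\top,\nabla^2 f_h\rangle+\langle\sigma\sigma^\top,\nabla^2f_h\rangle$, plus cross terms of the form $m\langle b,\cdot\rangle y$ and $m^2\nabla^3 f_h[y,y,y]$. By the moment bounds of Lemmas~\ref{XMYMesti} and \ref{MOMX}, $Y^m$ has size $m^{-1/2}$, so $mY^mY^{m\top}=O(1)$. Averaging out the fast velocity thus replaces $mYY^\top$ by $\frac12\sigma\sigma^\top$, and the leftover cross terms are formally $O(\sqrt m)$. The difficulty is that they involve third derivatives of $f_h$, which are not bounded.

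This is the main obstacle, and we would resolve it by smoothing. Replace $f_h$ by a mollification $f_{h,\varepsilon}$ at scale $\varepsilon$, or equivalently use the Schauder interior estimates of Proposition~\ref{regularity}. These should give $\|\nabla^2 f_h\|$ bounded only up to a logarithm: with $h$ merely Lipschitz, $\nabla^2 f_h$ is BMO-type, so Proposition~\ref{logf} presumably gives
\[
\|\nabla^2 f_{h,\varepsilon}\|\le C|\ln\varepsilon|,\qquad \|\nabla^3 f_{h,\varepsilon}\|\le C\varepsilon^{-1}.
\]
The error then splits into three parts. The mollification error is $C\varepsilon\,\mathbb{E}(1+|X^m|^p)$. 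The cross terms contribute $C\sqrt m\,|\ln\varepsilon|$ from $m\,\mathbb{E}|Y^m|\cdot\|\nabla^2 f\|$-type quantities, and $C m^2\,\mathbb{E}|Y^m|^3\,\varepsilon^{-1}\sim C\sqrt m\,\varepsilon^{-1}\cdot m$ terms. Choosing $\varepsilon=\sqrt m$ balances these and yields $C\sqrt m|\ln m|$. The conditions $m\le c_2/(2L_b)$ and $m\le 2/c_2$ are exactly what ensure the uniform-in-$m$ moment bounds $\mathbb{E}|X^m|^p\le C$ and $m\,\mathbb{E}|Y^m|^2\le C$. Taking the supremum over $h\in{\rm Lip}_0(1)$ concludes the proof.
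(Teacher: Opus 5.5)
The reduction $\nu_m(h)-\nu(h)=\mathbb{E}\langle \frac12\sigma\sigma^\top(X^m)-mY^m(Y^m)^\top,\nabla^2 f_h(X^m)\rangle_{\rm HS}$ is exactly Step 1 of the paper. Your generator computation for $m^2y^\top\nabla^2 f_h(x)y$ is also correct: it gives $\pi_m(\phi)=-\frac{m^2}{2}\mathbb{E}\,\nabla^3 f_h(X^m)[Y^m,Y^m,Y^m]-m\,\mathbb{E}\langle b(X^m),\nabla^2 f_h(X^m)Y^m\rangle$. The gap is in the final quantitative step.

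First, the regularity you postulate is off by one derivative. For Lipschitz $h$, $\nabla^2 f_h$ is not BMO-type. Propositions~\ref{regularity} and \ref{logf} say it is locally bounded with a log-Lipschitz modulus.

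Second, with your stated bounds the arithmetic does not close. You have $m^2\mathbb{E}|Y^m|^3\lesssim m^2\cdot m^{-3/2}=\sqrt m$, as you yourself noted when calling the cross terms formally $O(\sqrt m)$. So the cubic term is of size $\sqrt m\,\varepsilon^{-1}$, not $m^{3/2}\varepsilon^{-1}$. At $\varepsilon=\sqrt m$ this is $O(1)$. Balancing $\sqrt m\,\varepsilon^{-1}$ against a mollification error of order $\varepsilon$ gives at best $m^{1/4}$. Moreover, a uniform mollification error $O(\varepsilon)$ is inconsistent with $\nabla^2 f_h$ being merely BMO. As written, the argument does not produce $\sqrt m|\ln m|$.

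The repair is to use Proposition~\ref{logf} at its actual strength. Since $\int\nabla\rho_\varepsilon=0$ for the mollifier $\rho_\varepsilon$, for $\varepsilon\le 1/8$ you get:
\begin{itemize}
\item $\|\nabla^3 f_{h,\varepsilon}(x)\|\le C(1+|x|^5)|\ln\varepsilon|$;
\item $\|\nabla^2 f_{h,\varepsilon}(x)-\nabla^2 f_h(x)\|\le C(1+|x|^5)\varepsilon|\ln\varepsilon|$;
\item $|\nabla f_{h,\varepsilon}(x)-\nabla f_h(x)|\le C\varepsilon(1+|x|^4)$.
\end{itemize}
Then write $\mathbb{E}\mathcal{A}f_h(X^m)=\mathbb{E}\mathcal{A}f_{h,\varepsilon}(X^m)+O(\varepsilon|\ln\varepsilon|)$ and apply your two identities to the smooth $f_{h,\varepsilon}$. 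This gives $|\nu_m(h)-\nu(h)|\le C(\varepsilon|\ln\varepsilon|+\sqrt m|\ln\varepsilon|+\sqrt m)$, and $\varepsilon\asymp\sqrt m$ yields the theorem. Note that only polynomial bounds on $f_h$ and $\nabla f_h$ are available, not $\|\nabla f_h\|_\infty\le C$. With the uniform moments of Lemma~\ref{XMYMesti}, that suffices.

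Once repaired, your route is the $d$-dimensional analogue of the paper's proof of Theorem~\ref{thm2}, with mollification replacing the identity for $f'''$ obtained by differentiating the Stein equation. The paper's proof of Theorem~\ref{thm1} is different: it never differentiates $\nabla^2 f$. Instead it:
\begin{itemize}
\item applies stationarity to $\phi(X_t^m,Y_t^m)-\phi(X_0^m,Y_0^m)$ over a window $[0,t]$;
\item uses It\^o's formula only for $Y^m(Y^m)^\top$;
\item controls the increments $\nabla^2 f(X_t^m)-\nabla^2 f(X_s^m)$ by the same log-Lipschitz modulus together with $\mathbb{E}|X_t^m-X_0^m|^p\le C(t^{p/2}+m^{p/2})$;
\item takes $t=m$.
\end{itemize}
The time window $t$ plays the role of your mollification scale.
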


In the one-dimension case  the convergence rate can be improved.

\begin{thm}\label{thm2}
Let $d=1$, and suppose Assumptions \ref{A0},  \ref{A1} and \ref{A2} hold. If, in addition, the derivatives of $b$ and $\sigma$ exist, then there is a constant $C>0$ such that for any $m\leq\min\left\{\frac{c_2}{2L_b}, \frac{2}{c_2}, 1\right\}, $
	\begin{align*}
		\mathbb{W}_1(\nu_m, \nu)\leq C\sqrt{m}.
	\end{align*}
\end{thm}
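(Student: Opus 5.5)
The statement to prove is Theorem \ref{thm2}, the one-dimensional improvement. The plan is Stein's method for the generator $\mathcal{A}f(x)=b(x)f'(x)+\frac12\sigma^2(x)f''(x)$ of \eqref{EQ2}.

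For $h\in{\rm Lip}_0(1)$, let $f_h$ solve the Stein (Poisson) equation
\[
\mathcal{A}f_h=h-\nu(h).
\]
Then $\nu_m(h)-\nu(h)=\mathbb{E}[\mathcal{A}f_h(X^m)]$, where $(X^m,Y^m)\sim\pi_m$ is stationary.

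In $d=1$ the equation is an ODE and can be solved explicitly. With the speed density $p(x)\propto \sigma^{-2}(x)\exp\big(\int_0^x 2b/\sigma^2\big)$ (which is also the density of $\nu$), we have
\[
f_h'(x)=\frac{2}{\sigma^2(x)p(x)}\int_{-\infty}^x (h-\nu(h))\,p\,dy .
\]
From the dissipativity in Assumption \ref{A2} and ellipticity in Assumption \ref{A1} one obtains the classical bounds $\|f_h'\|_\infty\le C$ and $\|f_h''\|_\infty\le C$. One also gets a \emph{bounded third derivative} $\|f_h'''\|_\infty\le C(1+|x|^k)$ or better: differentiate the ODE, $\frac12\sigma^2f'''=h'-b'f'-bf''-\sigma\sigma'f''$, and use that $b',\sigma'$ exist and are bounded by $L$, together with $h'$ bounded. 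This extra derivative is exactly what is unavailable in $d>1$, where only Schauder-type $C^{2,\alpha}$ bounds hold and the H\"older blow-up produces the $|\ln m|$.

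Next I use stationarity of the kinetic system. Apply It\^o's formula to $\phi(X^m,Y^m)$ for test functions of the form $g(x)$, $y\,g(x)$ and $y^2g(x)$, and take expectations under $\pi_m$, which kill the time derivatives (moments of $\pi_m$ are controlled by Lemmas \ref{XMYMesti} and \ref{MOMX}). Writing $X=X^m$, $Y=Y^m$:
\begin{itemize}
\item With $\phi=f'(x)\,m y$: $\;0=\mathbb{E}[m Y^2 f''(X)+f'(X)(b(X)-Y)]$, so $\mathbb{E}[b f'(X)]=\mathbb{E}[Yf'(X)]-m\mathbb{E}[Y^2f''(X)]$.
\item With $\phi=f(x)$: $\;\mathbb{E}[Yf'(X)]=0$.
\item With $\phi=m^2y^2 g(x)/2$, $g=f''$: $\;0=\mathbb{E}\big[\tfrac{m^2}{2}Y^3 g'(X)+mY(b-Y)g+\tfrac12\sigma^2 g\big]$, hence
\[
m\mathbb{E}[Y^2f'']=\tfrac12\mathbb{E}[\sigma^2f'']+m\mathbb{E}[Ybf'']+\tfrac{m^2}{2}\mathbb{E}[Y^3f'''].
\]
\end{itemize}
Combining the three identities,
\[
\mathbb{E}[\mathcal{A}f_h(X)]=-m\,\mathbb{E}[Y b(X) f_h''(X)]-\tfrac{m^2}{2}\mathbb{E}[Y^3 f_h'''(X)].
\]

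The moment estimates under $\pi_m$ are $\mathbb{E}|Y|^p\lesssim m^{-p/2}$ (velocity is of order $m^{-1/2}$) and $\mathbb{E}|X|^p\le C$. The last term is therefore $O(m^2\cdot m^{-3/2})=O(\sqrt m)$, given polynomial-growth control of $f'''$.

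The main obstacle is the first term, because naively $m\,\mathbb{E}|Y|\sim\sqrt m$ only if $bf''$ is bounded, whereas $b$ grows linearly. I would handle this with a further stationarity identity. Apply It\^o to $m\,y\,G(x)$ with $G=bf''$ (Lipschitz, by the third-derivative bound and Assumption \ref{A0}):
\[
\mathbb{E}[YG]=\mathbb{E}[mY^2G']+\mathbb{E}[bG].
\]
Alternatively, use Cauchy--Schwarz, $m\,\mathbb{E}|Y||b(X)|\le m(\mathbb{E}Y^2)^{1/2}(\mathbb{E}b^2)^{1/2}\le Cm\cdot m^{-1/2}$, which already gives $O(\sqrt m)$ directly since $\mathbb{E}b(X)^2\le C$. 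So the real work is only the uniform moment bounds in $m$ and the third-derivative regularity estimate $|f_h'''(x)|\le C(1+|x|)$. I would prove the latter from the explicit formula using the tail estimate $\int_{-\infty}^x p\le Cp(x)/(1+|x|)$, which follows from Assumption \ref{A2}.

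Taking the supremum over $h\in{\rm Lip}_0(1)$ then yields $\mathbb{W}_1(\nu_m,\nu)\le C\sqrt m$.
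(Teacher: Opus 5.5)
Your proposal is correct and is essentially the paper's argument. You reach the same identity $\nu_m(h)-\nu(h)=-m\,\mathbb{E}[Y b(X) f''(X)]-\frac{m^2}{2}\mathbb{E}[Y^3 f'''(X)]$: your test functions $m y f'(x)$, $f(x)$ and $m^2y^2f''(x)/2$ reproduce the paper's use of $y f'(x)$ and of $\mathcal{A}_m\phi$ with $\phi=\frac12\sigma^2f''-my^2f''$. You then bound $f'''$ by differentiating the ODE and finish with H\"older's inequality and the bounds $\mathbb{E}|Y|^p\lesssim m^{-p/2}$, $\mathbb{E}|X|^p\le C$. The two differences are these. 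The paper takes polynomial bounds on $f',f''$ from its Schauder-based Proposition \ref{regularity} instead of deriving them from the explicit one-dimensional formula; either works, since all moments of $X^m$ are uniform in $m$. The paper also first restricts to $h\in C^1$ and concludes by density, a step you should add, because for Lipschitz $h$ the $h'$ in your differentiated ODE exists only almost everywhere.
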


\section{Auxiliary lemmas}\label{se3}
This section presents several auxiliary lemmas and propositions that will be used in the proofs of the main theorems. 
\subsection{The existence and uniqueness of the invariant distributions}
 Denote $(X_t^{m, x}, Y_t^{m, y})$ by the solution of \eqref{EQ1} with initial condition $(x, y)$ and $X_t^x$   the solution of \eqref{EQ2} with initial condition $x.$  Under Assumptions \ref{A0},  \ref{A1} and \ref{A2}, by a standard argument we can show that the SDEs \eqref{EQ1} and \eqref{EQ2} are both exponentially ergodic.  
\begin{prop}\label{prop2}
Let Assumptions \ref{A0},  \ref{A1} and \ref{A2} hold, and assume $m\leq\min\left\{\frac{c_2}{2L_b}, \frac{2}{c_2}, 1\right\}$. Then the SDE \eqref{EQ1} is exponentially ergodic and admits a  unique invariant distribution $\pi_m.$  More precisely, there exist constants $D_m>0$ and $\rho_m\in(0,1)$  such that for any $(x,y)\in\mathbb{R}^d\times\mathbb{R}^d,$ $t\geq0,$
\begin{align*}	\sup\limits_{|f|\leq\Tilde{V}_m}|\mathbb{E}f(X_t^{m,x},Y_t^{m,y})-\pi_m(f)|\leq D_m\Tilde{V}_m(x,y)\rho_m^t,
\end{align*} 
where $\Tilde{V}_m(x,y)=2|x|^2+6m^2|y|^2+4m\langle x,y\rangle+1.$
\end{prop}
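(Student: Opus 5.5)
The plan is the Harris-type theorem of Meyn--Tweedie: a Lyapunov drift condition for $\tilde V_m$ plus a minorization (small set) condition. Together these give $V$-uniform exponential ergodicity in the weighted norm $\sup_{|f|\le \tilde V_m}$, which is exactly the displayed estimate.

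\textbf{Step 1 (Lyapunov function).} Let $\mathcal{L}_m$ be the generator of \eqref{EQ1}:
\[
\mathcal{L}_m F(x,y)=\langle y,\nabla_x F\rangle+\tfrac1m\langle b(x)-y,\nabla_y F\rangle+\tfrac{1}{2m^2}\mathrm{Tr}\big(\sigma\sigma^\top(x)\nabla_y^2F\big).
\]
Applied to $\tilde V_m=2|x|^2+6m^2|y|^2+4m\langle x,y\rangle+1$, this gives
\[
\mathcal{L}_m\tilde V_m=4\langle x,y\rangle+12m\langle b(x),y\rangle-12m|y|^2+6\|\sigma\|^2+4\langle x,b(x)\rangle-4\langle x,y\rangle+4m|y|^2 .
\]
The cross terms $\pm4\langle x,y\rangle$ cancel; the weight $4m\langle x,y\rangle$ was chosen for this. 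What remains is
\[
4\langle x,b\rangle-8m|y|^2+12m\langle b,y\rangle+6\|\sigma\|_\infty^2 .
\]
I would then apply Assumption \ref{A2} to the first term. For the third, Young's inequality gives
\[
12m\langle b,y\rangle\le 4m|y|^2+9m|b|^2\le 4m|y|^2+9mL_b(1+|x|^2).
\]
The restriction $m\le c_2/(2L_b)$ makes the $|x|^2$ coefficient at most $-c_2/2$ after adjusting constants; if the constant $9$ is too large, a weighted Young inequality fixes it. The result is
\[
\mathcal{L}_m\tilde V_m\le -c_2|x|^2-4m|y|^2+C .
\]
Since $4m|\langle x,y\rangle|\le |x|^2+4m^2|y|^2$, the function $\tilde V_m$ is equivalent to $|x|^2+m^2|y|^2+1$. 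Using $m\le 2/c_2$ and $m\le1$ to compare $4m|y|^2$ with $m^2|y|^2$, I get
\[
\mathcal{L}_m\tilde V_m\le -\lambda_m\tilde V_m+K_m,\qquad \lambda_m>0 .
\]
Dynkin's formula with a localization argument then yields $\mathbb{E}\tilde V_m(X_t,Y_t)\le e^{-\lambda_m t}\tilde V_m(x,y)+K_m/\lambda_m$.

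\textbf{Step 2 (minorization on sublevel sets).} I need to show that for some fixed $T>0$ and every $R$, the transition kernels $P_T((x,y),\cdot)$ with $\tilde V_m(x,y)\le R$ share a common component $\eta\,\mu(\cdot)$ with $\eta>0$. The noise enters only the velocity equation, but the drift term $y$ in $dX=Y\,dt$ propagates it, and the system is hypoelliptic. For Lipschitz non-smooth coefficients I would argue as follows:
\begin{itemize}
\item[(a)] \emph{Irreducibility} via the Stroock--Varadhan support theorem. The controlled system $\dot x=y$, $m\dot y=b(x)-y+\sigma(x)u$ with $\sigma$ invertible by Assumption \ref{A1} can steer any $(x,y)$ to any target in time $T$: choose a smooth path $x(\cdot)$ with the prescribed endpoint positions and velocities and solve for $u$.
\item[(b)] \emph{Strong Feller property and positive continuous densities.} This can come from a Girsanov transform to the linear Langevin system with additive noise, which has an explicit Gaussian nondegenerate density. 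Alternatively, cite known results for kinetic SDEs with Lipschitz drift and uniformly elliptic diffusion in the velocity.
\end{itemize}
Strong Feller plus irreducibility makes compact sets petite, which is the needed minorization.

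\textbf{Step 3 (conclusion).} Drift plus petite compact sets, by Meyn--Tweedie (Theorem 6.1 of their continuous-time paper), gives existence and uniqueness of $\pi_m$ together with the bound
\[
\sup_{|f|\le\tilde V_m}|P_tf-\pi_m(f)|\le D_m\tilde V_m\rho_m^t .
\]

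The main obstacle is Step 2. A degenerate diffusion with merely Lipschitz $\sigma(x)$ makes the Girsanov route delicate, because the noise coefficient depends on position. I would first try to freeze $\sigma$ via a time change or localization. Failing that, I would invoke existing hypoelliptic heat-kernel lower bounds for kinetic SDEs under uniform ellipticity.
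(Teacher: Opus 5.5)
Your plan is essentially the paper's proof. It uses the same $\tilde V_m$ and drift computation, then strong Feller plus irreducibility feeding a Harris/Meyn--Tweedie-type theorem; the paper gets both properties at once from a Harnack inequality adapted from Wang--Zhang and concludes via Wu's Theorem 2.4. In both versions the hypoelliptic step with $x$-dependent $\sigma$ rests on citation: as you suspect, Girsanov can remove $b$ but not the $x$-dependence of $\sigma$, so your fallback citation is what actually carries Step 2.

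One arithmetic fix is needed. Split $12m\langle b,y\rangle\le 6m|y|^2+6m|b|^2$ as the paper does, since with your constant $9$ the $|x|^2$ coefficient is only bounded by $-4c_2+\frac92 c_2=+\frac{c_2}{2}$. The $6/6$ split gives $-c_2|x|^2-2m|y|^2+C$; your $-4m|y|^2$ does not follow under $m\le c_2/(2L_b)$. Then $m\le 2/c_2$ yields $-c_2(|x|^2+m^2|y|^2)+C$, which is all you need.
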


\begin{prop}\label{egeq2}
Let Assumptions \ref{A0}, \ref{A1} and \ref{A2} hold. Then the SDE \eqref{EQ2} is exponentially ergodic and possesses a unique invariant measure $\nu.$ More precisely,  there exist two positive constants $\kappa_1, \kappa_2$ such that 
\begin{align}\label{eg}
	\sup\limits_{|f|\leq 1+|\cdot|^2}[\mathbb{E}f(X_t^x)-\nu(f)]\leq \kappa_1(1+|x|^2)e^{-\kappa_2t},\ \forall x\in\mathbb{R}^d.
\end{align}
This implies
\begin{align}\label{eg1}
	\mathbb{W}_1(\mathcal{L}(X_t^x),\nu)\leq \kappa_1(1+|x|^2)e^{-\kappa_2t}, \ \forall x\in\mathbb{R}^d.
\end{align} 
\end{prop}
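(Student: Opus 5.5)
The statement to prove is Proposition~\ref{egeq2}. I would use the Harris-type theorem of Meyn--Tweedie (in the form of Hairer--Mattingly): a Lyapunov drift condition for $V(x)=1+|x|^2$, plus a minorization on sublevel sets of $V$, gives geometric ergodicity in the weighted norm $\sup_{|f|\le V}$.

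\textbf{Drift condition.} Let $\mathcal{L}f=\langle b,\nabla f\rangle+\frac12\langle \sigma\sigma^T,\nabla^2 f\rangle_{\rm HS}$ be the generator of \eqref{EQ2}. For $V(x)=1+|x|^2$, Assumptions \ref{A1} and \ref{A2} give
\[
\mathcal{L}V(x)=2\langle x,b(x)\rangle+\|\sigma(x)\|^2\le 2c_1+\|\sigma\|_\infty^2-2c_2|x|^2\le -2c_2V(x)+K,
\qquad K:=2c_1+\|\sigma\|_\infty^2+2c_2.
\]
By It\^o's formula with localization (legitimate since the Lipschitz coefficients of Assumption \ref{A0} yield a unique strong solution with finite second moments) and Gronwall's inequality,
\[
\mathbb{E}V(X_t^x)\le e^{-2c_2t}V(x)+\frac{K}{2c_2}.
\]
This is the Lyapunov condition for the skeleton chain $P_T$ at any fixed $T>0$.

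\textbf{Minorization.} This is the step I expect to carry the most weight. Assumption \ref{A1} (uniform ellipticity) together with Lipschitz coefficients implies that the transition kernel $P_T(x,\cdot)$ has a continuous, strictly positive density $p_T(x,y)$. One can get this from Aronson-type Gaussian lower bounds, or from the support theorem combined with the strong Feller property. Then on the compact set $\{V\le R\}$ with $R>2K/c_2$, there exist $\alpha>0$ and a probability measure $\mu$ such that $P_T(x,\cdot)\ge\alpha\,\mu(\cdot)$ for all $x$ with $V(x)\le R$. If one wants to avoid heat-kernel bounds, an alternative is to show that $P_T(x,B_1(0))$ is bounded below uniformly for $|x|\le r$ via Girsanov, and to use the smoothness of densities on that ball. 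Since this is standard, I would cite it rather than reprove it.

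\textbf{Conclusion.} Harris' theorem applied to $P_T$ gives existence and uniqueness of $\nu$ and
\[
\sup_{|f|\le V}|P_{nT}f(x)-\nu(f)|\le C V(x)\gamma^n,\qquad \gamma<1.
\]
To pass to general $t=nT+s$ with $s\in[0,T)$, I would use $P_t f=P_{nT}(P_sf)$ together with $|P_sf|\le P_sV\le V+K/(2c_2)\le C'V$. This yields \eqref{eg} with $\kappa_2=-\ln\gamma/T$.

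\textbf{Deducing \eqref{eg1}.} Every $h\in{\rm Lip}_0(1)$ satisfies $|h(x)|\le|x|\le 1+|x|^2$. Taking the supremum over such $h$ in \eqref{eg} and applying Kantorovich duality gives \eqref{eg1} directly.
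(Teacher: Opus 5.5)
Your proposal is correct and follows essentially the same route as the paper. Both use the Lyapunov function $V(x)=1+|x|^2$ with the drift bound $\mathcal{A}V\le -2c_2V+2c_1+2c_2+\|\sigma\|_\infty^2$, then a Meyn--Tweedie/Harris-type theorem, then $|h(x)|\le 1+|x|^2$ together with Kantorovich duality to get \eqref{eg1}; the paper simply cites the continuous-time criterion of \cite{MT3} directly instead of passing through a skeleton chain and interpolating, and you are if anything more complete in making explicit the minorization on compact sets (from uniform ellipticity) that the paper leaves implicit in that citation.
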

The proofs of these propositions rely on standard arguments and are provided in the Section~\ref{seerg} for completeness.
\subsection{Moment estimates}\label{ME1}
In the following we give the moment estimates for the invariant distributions of the equations \eqref{EQ1} and \eqref{EQ2}.
\begin{lem}\label{XMYMesti}
	Let Assumptions \ref{A0},  \ref{A1} and \ref{A2} hold. Assume $(X_0^m, Y^m_0)\sim\pi_m$ and $m\leq\min\left\{\frac{c_2}{2L_b}, \frac{2}{c_2}, 1\right\}$. Then for any $p\ge2,$ there exists constants $C_p$ such that the following estimates hold:

\begin{itemize}
    \item[\rm(1)] $\mathbb{E}|X_t^m|^{p}\leq C_p,\quad t\ge0;$

 \item[\rm(2)] $\mathbb{E}|Y_t^m|^{p}\leq C_p/m^\frac{p}{2}, \quad t\ge0;$

\item[\rm(3)] $\mathbb{E}|X_t^m-X_0^m|^p\leq C_p(t^\frac{p}{2}+m^\frac{p}{2}), \quad t\in[0,1];$
	
\item[\rm(4)] $\mathbb{E}[|X_t^m-X_0^m|^p|\ln|X_t^m-X_0^m||^p]\leq C_p(t^\frac{p}{2}+m^\frac{p}{2})[|\ln(t+m)|^p+1], \quad  t \in [0,1].$
\end{itemize}
\end{lem}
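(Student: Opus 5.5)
The plan is a Lyapunov/Itô argument with the same quadratic form that appears in Proposition~\ref{prop2}, combined with stationarity. Define
\[
V_m(x,y)=2|x|^2+6m^2|y|^2+4m\langle x,y\rangle+1=\Tilde V_m(x,y).
\]
Since $4m|\langle x,y\rangle|\le |x|^2+4m^2|y|^2$, we have
\[
V_m\ge |x|^2+2m^2|y|^2+1,
\]
so $V_m$ is comparable to $1+|x|^2+m^2|y|^2$.

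For parts (1) and (2), I would apply Itô's formula to $V_m^{p/2}(X^m_t,Y^m_t)$. The drift of $V_m$ under \eqref{EQ1} is
\[
\mathcal L_m V_m = 4\langle x,y\rangle+12m\langle y,b\rangle-12m|y|^2+6\|\sigma\|^2+4m|y|^2+4\langle x,b\rangle-4\langle x,y\rangle.
\]
The $\langle x,y\rangle$ terms cancel, which is why the coefficients were chosen this way. This leaves
\[
4\langle x,b\rangle-8m|y|^2+12m\langle y,b\rangle+6\|\sigma\|^2_\infty .
\]
Using Young's inequality $12m|\langle y,b\rangle|\le 4m|y|^2+9m|b|^2$, the linear growth bound $|b|^2\le L_b(1+|x|^2)$, Assumption~\ref{A2}, and the restriction $m\le c_2/(2L_b)$, we get
\[
\mathcal L_mV_m\le C-c\,(|x|^2+m|y|^2).
\]
Because $m\le 1$, this is $\le C-c'V_m$. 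The higher powers add a quadratic-variation term of size $p\,V_m^{p/2-2}|\nabla_y V_m\,\sigma/m|^2$. Here $\nabla_yV_m/m=12m y+4x$, which is $O(V_m^{1/2})$, so that term is bounded by $C V_m^{p/2-1}$. Hence
\[
\mathcal L_m V_m^{p/2}\le C-c V_m^{p/2}.
\]
Under stationarity, with a standard localization and Fatou argument (or finiteness of $\pi_m(V_m^{p/2})$ from ergodicity), we obtain $\pi_m(V_m^{p/2})\le C/c$. This gives (1) with constants uniform in $m$.

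For (2), the same bound only yields $m^p\mathbb E|Y|^p\le C$, which is too weak. I would therefore use the dissipated term $m|y|^2$ directly. Applying Itô to $m^{p/2}|y|^{p}$ gives
\[
d(m|Y|^2)=\Big(-\tfrac{2}{m}|Y|^2+\tfrac{2}{m}\langle Y,b\rangle+\tfrac{1}{m}\|\sigma\|^2\Big)dt+\text{mart}.
\]
Consequently $Z=m|Y|^2$ satisfies
\[
dZ\le \tfrac1m\big(-Z/m+C|b(X)|^2+C\big)\,dt+\dots
\]
Raising to the power $p/2$ and taking stationary expectations, the relaxation rate $1/m^2$ balances the forcing $1/m$. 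This gives $\mathbb E Z^{p/2}\le C\,\mathbb E(1+|X|^2)^{p/2}$, i.e.\ $\mathbb E|Y|^p\le C_p m^{-p/2}$ after using (1). I expect this to be the main obstacle: the martingale correction terms must be tracked so that every term scales correctly in $m$. I would first attempt it by Itô on $(m|Y|^2)^{p/2}$, bounding cross terms by Young's inequality.

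For (3), I would integrate the velocity equation explicitly:
\[
X_t^m-X_0^m=\int_0^t b(X_s)\,ds+\int_0^t\sigma(X_s)\,dB_s-m(Y_t^m-Y_0^m),
\]
obtained by integrating $m\,dY=b\,dt+\sigma\,dB-dX$. The first term is $O(t^p)$ in $L^p$ by (1), the second is $O(t^{p/2})$ by BDG and boundedness of $\sigma$, and the third is $O(m^p\cdot m^{-p/2})=O(m^{p/2})$ by (2). Since $t\le 1$, this gives (3).

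For (4), set $R=|X_t^m-X_0^m|$ and $\varepsilon=t+m$. On $\{R\le\varepsilon\}$, the function $r\mapsto r|\ln r|$ is increasing on $[0,e^{-1}]$ and $\le C$ otherwise. Handling the case $\varepsilon\ge e^{-1}$ separately, this gives $R^p|\ln R|^p\le C\varepsilon^p(|\ln\varepsilon|^p+1)$. On $\{R>\varepsilon\}$ we have $|\ln R|\le|\ln\varepsilon|+|\ln R|\mathbf 1_{R>1}\le |\ln\varepsilon|+R$. Hence
\[
R^p|\ln R|^p\le C R^p|\ln\varepsilon|^p+CR^{2p}.
\]
Taking expectations and applying (3) with exponents $p$ and $2p$ yields
\[
C(t^{p/2}+m^{p/2})\big(|\ln(t+m)|^p+1\big),
\]
using $t^{p}+m^{p}\le t^{p/2}+m^{p/2}$. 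Finally, $\varepsilon^p\le C(t^{p/2}+m^{p/2})$ since $\varepsilon\le 2$.
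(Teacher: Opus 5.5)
Your argument is correct in structure. Parts (1) and (2) follow the paper's scheme: the Lyapunov function $V_m=2|x|^2+6m^2|y|^2+4m\langle x,y\rangle$ with stationarity, then It\^o on powers of $|Y|^2$ with stationarity and Young. Parts (3) and (4) take different but equally elementary routes. Two computational slips need repair; neither affects the strategy.

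First, in (1) your split $12m|\langle y,b\rangle|\le 4m|y|^2+9m|b|^2$ fails under the hypothesis $m\le c_2/(2L_b)$. The coefficient of $|x|^2$ becomes $-4c_2+9mL_b$, which can be as large as $+\tfrac12 c_2$. Use $12m|\langle y,b\rangle|\le 6m|y|^2+6m|b|^2$ instead, as the paper does. This leaves $-2m|y|^2$ and an $|x|^2$-coefficient $6mL_b-4c_2\le -c_2$.

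Second, in (2) the drift of $Z=m|Y|^2$ is mis-scaled. It\^o gives
\[
dZ=\frac1m\bigl(-2Z+2m\langle Y,b(X)\rangle+\|\sigma(X)\|^2\bigr)\,dt+2\langle Y,\sigma(X)\,dB\rangle ,
\]
so the relaxation rate is $2/m$, not $1/m^2$. With the rates you wrote, stationarity would force $\mathbb{E}Z=O(m)$, i.e.\ $\mathbb{E}|Y|^2=O(1)$. That is false: uniform ellipticity makes $\mathbb{E}|Y_t^m|^2$ of exact order $1/m$. With the correct scaling, $d\langle Z\rangle\le 4\|\sigma\|_\infty^2 Z\,dt/m$, and $2m|\langle Y,b\rangle|\le Z+m|b|^2$. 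So every term in the drift of $Z^{p/2}$ carries the common factor $1/m$. Stationarity plus Young then give $\mathbb{E}Z^{p/2}\le C(1+\mathbb{E}|b(X)|^{p})$, which is your claim. The obstacle you anticipated disappears once the scaling is right.

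For (3), the paper uses the variation-of-constants formula
\[
X_t^m=X_0^m+mY_0^m(1-e^{-t/m})+\int_0^t(1-e^{-(t-s)/m})b(X_s^m)\,ds+\int_0^t(1-e^{-(t-s)/m})\sigma(X_s^m)\,dB_s ,
\]
and bounds the exponentially weighted stochastic integral by $Cm^{p/2}$ via BDG. Your identity $X_t^m-X_0^m=\int_0^t b\,ds+\int_0^t\sigma\,dB-m(Y_t^m-Y_0^m)$ is shorter and avoids the kernel. The price is that it needs the velocity moment at time $t$ as well as at time $0$. That is harmless here by stationarity, whereas the paper's formula involves only $Y_0^m$ and would also serve for non-stationary initial data.

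For (4), the paper normalises $W=|X_t^m-X_0^m|^p/\alpha$ with $\alpha=t^{p/2}+m^{p/2}$. It uses $\mathbb{E}W+\mathbb{E}W^2\le C$, boundedness of $x|\ln x|^p$ on $(0,1)$, and $|\ln x|^p\le C(1+x)$ on $[1,\infty)$, then converts $|\ln\alpha|$ into $|\ln(t+m)|$. Your split at level $t+m$ produces $|\ln(t+m)|$ directly. On one side you use monotonicity of $r|\ln r|$ below $e^{-1}$; on the other, $|\ln R|\le|\ln(t+m)|+R$. Both arguments rely on the $2p$-th moment from (3), and both are valid.
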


\begin{lem}\label{MOMX}
	Let Assumptions \ref{A0}, \ref{A1} and \ref{A2} hold.  If $X_0\sim\nu,$ then  there exists a constant $C>0$ such that
	\begin{equation}\label{ineq4}
		\mathbb{E}|X_t|^2\leq C, \quad  t\geq0.
	\end{equation} 

\end{lem}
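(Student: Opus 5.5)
Since $X_0\sim\nu$ and $\nu$ is invariant for \eqref{EQ2}, $X_t\sim\nu$ for every $t\ge0$. It therefore suffices to show $\nu(|\cdot|^2)=\int_{\mathbb{R}^d}|x|^2\nu(dx)<\infty$ and to take $C:=\nu(|\cdot|^2)$. I would obtain this from a uniform-in-time second moment bound for the solution started from a fixed point, together with the ergodicity of Proposition \ref{egeq2}.

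\textbf{Step 1: uniform moment bound from a deterministic start.} Apply It\^o's formula to $e^{c_2 t}|X_t^x|^2$. Using Assumption \ref{A2} and the bound $\|\sigma\|_\infty<\infty$ from Assumption \ref{A1},
\begin{align*}
d|X_t^x|^2=\big(2\langle X_t^x,b(X_t^x)\rangle+\|\sigma(X_t^x)\|^2\big)dt+dM_t\le\big(2c_1+\|\sigma\|_\infty^2-2c_2|X_t^x|^2\big)dt+dM_t,
\end{align*}
where $M_t$ is a local martingale. To remove $M_t$, localize with the stopping times $\tau_n=\inf\{t:|X_t^x|\ge n\}$, take expectations, and let $n\to\infty$ using Fatou's lemma; non-explosion holds by the Lipschitz Assumption \ref{A0}. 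This gives
\begin{align*}
\mathbb{E}|X_t^x|^2\le e^{-c_2t}|x|^2+\frac{2c_1+\|\sigma\|_\infty^2}{c_2},\qquad t\ge0.
\end{align*}

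\textbf{Step 2: transfer to $\nu$.} Fix $x=0$ and $R>0$, and set $f_R=|\cdot|^2\wedge R$. Then $|f_R|\le 1+|\cdot|^2$, so \eqref{eg} in Proposition \ref{egeq2} gives $\nu(f_R)\le \mathbb{E}f_R(X_t^0)+\kappa_1e^{-\kappa_2t}$. The sign convention in \eqref{eg} has a supremum without absolute value; this is handled by applying it to $-f_R$, which also satisfies $|-f_R|\le 1+|\cdot|^2$.

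Bound $\mathbb{E}f_R(X_t^0)$ by Step 1 and let $t\to\infty$. This yields $\nu(f_R)\le (2c_1+\|\sigma\|_\infty^2)/c_2$. Monotone convergence as $R\to\infty$ then gives $\nu(|\cdot|^2)\le (2c_1+\|\sigma\|_\infty^2)/c_2$.

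An alternative that avoids \eqref{eg} is to start the equation from $\nu$ itself and use invariance. However, that route needs $\nu(|\cdot|^2)<\infty$ a priori, or a truncated Lyapunov argument, which is why I prefer the route through Proposition \ref{egeq2}.

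\textbf{Step 3: conclude.} By stationarity, $\mathbb{E}|X_t|^2=\nu(|\cdot|^2)\le C$ for all $t\ge0$, with $C=(2c_1+\|\sigma\|_\infty^2)/c_2$.

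The only delicate point is justifying the interchange of limits and the martingale removal in Step 1, which is routine localization. Everything else follows directly from Assumptions \ref{A1}, \ref{A2} and Proposition \ref{egeq2}.
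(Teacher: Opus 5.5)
Your proof is correct, but it reaches $\nu$ by a different route than the paper. The paper runs the Lyapunov estimate directly for the stationary process. From \eqref{AVX} it gets $\mathbb{E}V(X_t)\le e^{-2c_2t}\mathbb{E}V(X_0)+\frac{2c_1+2c_2+\|\sigma\|_\infty^2}{2c_2}(1-e^{-2c_2t})$ with $X_0\sim\nu$, and then uses $\mathbb{E}V(X_t)=\mathbb{E}V(X_0)$ to cancel. This is shorter, but the cancellation tacitly requires $\nu(V)<\infty$; otherwise the inequality reads $\infty\le\infty$. The paper leaves this finiteness implicit, and it is essentially what your Step 2 extracts from \eqref{eg}.

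Your version starts from the deterministic point $0$, where the localization is clean. You then transfer the uniform bound to $\nu$ via \eqref{eg} applied to the bounded truncations $-f_R$. So finiteness of $\nu(|\cdot|^2)$ is proved rather than presupposed, at the price of relying on the quantitative ergodicity of Proposition \ref{egeq2}.

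Even that is more than you need, because invariance alone gives
\[
\nu(f_R)=\int\mathbb{E}f_R(X_t^x)\,\nu(dx)\le\int\min\bigl\{R,\ e^{-c_2t}|x|^2+K\bigr\}\,\nu(dx),\qquad K=\frac{2c_1+\|\sigma\|_\infty^2}{c_2}.
\]
Bounded convergence as $t\to\infty$ then yields $\nu(f_R)\le K$. This is precisely the truncated Lyapunov argument you mention as an alternative, and it is the cleanest way to make the paper's stationary-start route rigorous without invoking \eqref{eg}.
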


\subsection{Stein's equation }\label{SEE}

The infinitesimal generator associated with the SDE \eqref{EQ1} is
\begin{equation} \label{e:AmGen}
\begin{split}
	\mathcal{A}_mf(x, y)=&\langle y,\nabla_xf(x,y)\rangle+\frac{1}{m}\langle b(x)-y,\nabla_yf(x,y)\rangle \\
    &+\frac{1}{2m^2}\langle \sigma(x)\sigma^T(x),\nabla^2_{yy}f(x,y)\rangle_{\rm HS},  \ \forall f \in C^2_p(\mathbb{R}^d \times \mathbb{R}^d, \mathbb{R}),
\end{split}
\end{equation}
while the infinitesimal generator of the limiting equation \eqref{EQ2} is
\begin{align} \label{e:AGen}
\mathcal{A} g(x) = \langle b(x), \nabla g(x) \rangle + \frac{1}{2} \bigl\langle \sigma(x)\sigma^{\top}(x), \nabla^2 g(x) \bigr\rangle_{\mathrm{HS}}, \ \forall g \in C^2_p(\mathbb{R}^d, \mathbb{R}).
\end{align}

For a given $h\in{\rm Lip}_0(1),$  the corresponding Stein equation is
\begin{equation}\label{stein}
	\mathcal{A}f(x)=h(x)-\nu(h),
\end{equation}
where $\mathcal{A}$ is defined in \eqref{e:AGen}. 
Denote by $\left\{P_t\right\}$  the transition semigroup of \eqref{EQ2}, i.e.,  $P_tf(x):=\mathbb{E}[f(X^x_t)], $ for any $f \in C_p(\mathbb{R}^d, \mathbb{R})$ with $\mathbb{E}f(X_t^x)<\infty.$
\begin{lem}\label{soluofstein}
	For any $h\in{\rm Lip}_0(1),$ the function
	 \begin{equation}\label{solution}
		f(x):=-\int_0^\infty P_t[h(x)-\nu(h)]dt
	\end{equation}
	is a solution to the Stein equation \eqref{stein}.
	Moreover,  there exists a constant $C>0$ such that 
	\begin{equation}\label{linear}
		|f(x)|\leq C(1+|x|^2).
	\end{equation}
	
\end{lem}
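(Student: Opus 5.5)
Since $h(0)=0$ and $h$ is $1$-Lipschitz, we have $|h(x)|\le|x|\le 1+|x|^2$. Hence $h$ is an admissible test function in \eqref{eg} of Proposition~\ref{egeq2}. This gives, for all $t\ge0$,
\[
|P_th(x)-\nu(h)|\le \kappa_1(1+|x|^2)e^{-\kappa_2 t}.
\]
Applying \eqref{eg} to $-h$ as well handles the absolute value. Alternatively, use \eqref{eg1} together with Kantorovich duality, since $|P_th(x)-\nu(h)|\le \mathbb{W}_1(\mathcal{L}(X_t^x),\nu)$. In either case the integrand in \eqref{solution} is absolutely integrable in $t$. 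So $f$ is well defined, and
\[
|f(x)|\le \int_0^\infty \kappa_1(1+|x|^2)e^{-\kappa_2t}\,dt=\frac{\kappa_1}{\kappa_2}(1+|x|^2),
\]
which is \eqref{linear}. The bound is locally uniform in $x$, so $f$ is continuous by dominated convergence, using continuity of $x\mapsto P_th(x)$, which follows from the Lipschitz continuous dependence of $X_t^x$ on $x$.

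To show that $f$ solves \eqref{stein}, I will argue through the semigroup. Set $\bar h=h-\nu(h)$. By the Markov property and Fubini (justified by the exponential bound and $\sup_{s\le T}\mathbb{E}|X^x_s|^2\le C(1+|x|^2)$),
\[
P_sf(x)-f(x)=-\int_0^\infty\bigl(P_{t+s}\bar h-P_t\bar h\bigr)(x)\,dt=\int_0^s P_t\bar h(x)\,dt .
\]
Dividing by $s$ and letting $s\downarrow0$ gives $\lim_{s\to0}(P_sf-f)/s=\bar h$ pointwise, since $t\mapsto P_t\bar h(x)$ is continuous at $0$. Thus $f$ lies in the domain of the (weak, pointwise) generator with $\mathcal{A}f=h-\nu(h)$.

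The main obstacle is identifying this generator with the differential operator \eqref{e:AGen}. This requires $f\in C^2$, and $h$ is only Lipschitz. I would use uniform ellipticity (Assumption~\ref{A1}) and Lipschitz coefficients (Assumption~\ref{A0}) to get local regularity. On any ball $B$, I would first verify that $f$ is a continuous distributional (or viscosity) solution of $\mathcal{A}f=\bar h$. This can be done by testing the identity $P_sf-f=\int_0^sP_t\bar h\,dt$ against $\varphi\in C_c^\infty(B)$, using It\^o's formula for the adjoint direction. Alternatively, I would compare $f$ with the classical solution $u$ of the Dirichlet problem $\mathcal{A}u=\bar h$ in $B$, $u=f$ on $\partial B$. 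The data are Hölder and the coefficients Lipschitz, so Schauder theory gives $u\in C^{2,\alpha}(B)\cap C(\bar B)$. Dynkin's formula up to the exit time $\tau_B$, combined with the strong Markov representation
\[
f(x)=\mathbb{E}\Bigl[f(X_{\tau_B}^x)-\int_0^{\tau_B}\bar h(X_s^x)\,ds\Bigr],
\]
shows $u=f$ in $B$. That representation itself follows from the integral identity and the strong Markov property. Hence $f\in C^{2}$ locally, and \eqref{stein} holds classically. The polynomial growth of $\nabla f$ and $\nabla^2f$ presumably comes later from Schauder interior estimates (Proposition~\ref{regularity}), and is not needed here beyond $f\in C^2$.
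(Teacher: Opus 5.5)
Your proposal is correct. The growth bound $|f(x)|\le \frac{\kappa_1}{\kappa_2}(1+|x|^2)$ is obtained as in the paper, via \eqref{eg1} and Kantorovich duality.

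For the equation itself you take a genuinely different route. The paper introduces the resolvents $u_\lambda=\int_0^\infty e^{-\lambda t}P_t\hat h\,dt$ with $\hat h=\nu(h)-h$, uses $\lambda u_\lambda-\hat h=\mathcal{A}u_\lambda$, and lets $\lambda\to0$. It justifies the limit by the closedness of $\mathcal{A}$, although only pointwise convergence $u_\lambda\to f$ is available. It stays at the level of the abstract generator and never checks that $f\in C^2$, i.e. that this generator acts on $f$ as the differential operator \eqref{e:AGen}.

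You instead derive the identity $P_sf-f=\int_0^sP_t\bar h\,dt$ directly. This needs neither resolvents nor a closedness step. You then make the identification on balls: you compare $f$ with the Schauder solution $u\in C^{2,\alpha}(B)\cap C(\bar B)$ of the Dirichlet problem. That problem is solvable because $\sigma\sigma^\top$, $b$ and $\bar h$ are Lipschitz, $\sigma\sigma^\top\ge\sigma_0 I_d$, and $f$ is continuous on $\partial B$.

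This costs some extra, standard machinery:
\begin{itemize}
\item $\mathbb{E}\tau_B<\infty$, which follows from uniform ellipticity;
\item Dynkin's formula for $u$, obtained by exhausting $B$ with smaller balls and using $u\in C(\bar B)$;
\item the stopped representation of $f$.
\end{itemize}
The cleanest way to get that representation is to note that your identity plus the Markov property make $f(X_t^x)-f(x)-\int_0^t\bar h(X_r^x)\,dr$ a continuous martingale. Optional stopping at $\tau_B\wedge T$, followed by $T\to\infty$, then gives the formula.

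What you gain is a classical $C^{2,\alpha}_{\rm loc}$ solution. The paper tacitly needs exactly this later: when it applies It\^o's formula to $\langle y,\nabla f(x)\rangle$, and when it invokes Lemma \ref{Schauder}, which assumes $u\in C^{2,\alpha}$, in the proof of Proposition \ref{regularity}.
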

\begin{proof}
    Let $\hat{h}=\nu(h)-h$ and we first verify that $f$ is well defined. For any $x\in\mathbb{R}^d$, by the exponential convergence estimate \eqref{eg1} there exist two positive constants $\kappa_1$ and $\kappa_2$ such that 
	\begin{equation*}
		|P_th(x)-\nu(h)|\leq\mathbb{W}_1(P^*_t\delta_x, \nu)\leq \kappa_1 e^{-\kappa_2 t}(1+|x|^2), \  \forall t\geq0
	\end{equation*}
	which implies
    \begin{equation}\label{bound:I}|f(x)|\leq \frac{\kappa_1}{\kappa_2}(1+|x|^2).\end{equation}
     
     For any $\lambda>0$,  the resolvent $\lambda-\mathcal{A}$ of the generator $\mathcal{A}$   
     satisfies
\[
(\lambda - \mathcal{A})^{-1} \hat{h} = \int_0^\infty e^{-\lambda t} P_t \hat{h} \, dt.
\]
Hence, the function \(u_\lambda(x) := \int_0^\infty e^{-\lambda t} P_t \hat{h}(x) \, dt\) belongs to the domain of \(\mathcal{A}\) and
\begin{equation}\label{resolvent}
    \lambda u_\lambda - \hat{h} = \mathcal{A} u_\lambda.
\end{equation}

By the dominated convergence theorem, \(u_\lambda(x) \to f(x)\) as \(\lambda \to 0\) for each \(x\). Moreover, the bound \eqref{bound:I} guarantees that the convergence is dominated by an integrable function (with respect to \(t\)). Since \(\mathcal{A}\) is a closed operator and \(u_\lambda\) converges pointwise to \(f\), passing to the limit \(\lambda \to 0\) in \eqref{resolvent} gives
\[
 \mathcal{A} f(x)=-\hat{h}(x) = h(x)-\nu(h),
\]
which completes the proof.
     
\end{proof}

Next we give the regularity estimates on the solution $f$ of the Stein equation Eq.\eqref{stein}, whose proofs will be presented in Section $5$.
\begin{prop}\label{regularity}
Let Assumptions \ref{A0}, \ref{A1} and \ref{A2} hold, then there exists some positive constant $C$ such that for any $x\in\mathbb{R}^d$
 \begin{equation}
 |\nabla f(x)|\leq C(1+|x|^3)\ \text{and} \  ||\nabla^2f(x)||\leq C(1+|x|^4).
  \end{equation}
 \end{prop}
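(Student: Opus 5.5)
The plan is to use interior Schauder estimates on unit balls. The keywords already point to this, and the coefficients are only Lipschitz, so differentiating the semigroup via Bismut-type formulas would need more smoothness than we have.

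Fix $x_0\in\mathbb{R}^d$ and work on the balls $B_1(x_0)\subset B_2(x_0)$. On $B_2(x_0)$ the Stein equation \eqref{stein} reads
\[
\tfrac12\langle \sigma\sigma^\top,\nabla^2 f\rangle_{\rm HS}+\langle b,\nabla f\rangle = h-\nu(h).
\]
The leading coefficient $a=\sigma\sigma^\top$ has the following properties:
\begin{itemize}
\item it is uniformly elliptic with constant $\sigma_0$, by Assumption~\ref{A1};
\item it is bounded by $\|\sigma\|_\infty^2$;
\item it is Lipschitz with constant $2L\|\sigma\|_\infty$, so its $C^\alpha(B_2(x_0))$ norm is bounded uniformly in $x_0$.
\end{itemize}
The drift $b$ is different: its sup norm and Lipschitz seminorm on $B_2(x_0)$ are only bounded by $C(1+|x_0|)$, by Assumption~\ref{A0}. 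The right-hand side satisfies $|h(x)-\nu(h)|\le |x|+\nu(|\cdot|)\le C(1+|x_0|)$ on the ball, and its Lipschitz constant is $1$. Finally, \eqref{linear} gives $\|f\|_{L^\infty(B_2(x_0))}\le C(1+|x_0|^2)$.

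The main obstacle is that the Schauder constant depends on the $C^\alpha$ norm of the drift, and this norm grows with $|x_0|$. To avoid tracking that dependence, I would move the drift to the right-hand side. Write
\[
\tfrac12\langle a,\nabla^2 f\rangle_{\rm HS}=g:=h-\nu(h)-\langle b,\nabla f\rangle .
\]
The interior Schauder estimate for this pure second-order operator (Gilbarg--Trudinger, Thm.~6.2) has a constant depending only on $\sigma_0$, $\|a\|_{C^\alpha}$ and $d$, hence uniform in $x_0$. It gives
\[
\|f\|_{C^{2,\alpha}(B_1)}\le C\big(\|f\|_{L^\infty(B_2)}+\|g\|_{C^\alpha(B_2)}\big),
\]
and
\[
\|g\|_{C^\alpha(B_2)}\le C(1+|x_0|)+C(1+|x_0|)\,\|\nabla f\|_{C^\alpha(B_2)} .
\]
This still contains $\nabla f$ on the larger ball, so I need a first-order bound before the second-order one can be closed.

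\textbf{Step 1: gradient bound.} I would use the $W^{2,p}$ (Calderón--Zygmund) interior estimate with $p>d$ on a ball whose radius is scaled to $r=(1+|x_0|)^{-1}$. Rescale by setting $\tilde f(z)=f(x_0+rz)$ on $B_2(0)$. Then $\tilde f$ solves an equation with leading coefficient $a(x_0+rz)$, which is still uniformly elliptic and uniformly Hölder, and with drift $r\,b(x_0+rz)$, which is bounded by $C$ uniformly in $x_0$. The right-hand side becomes $r^2\big(h-\nu(h)\big)$. For this rescaled problem the full operator has uniformly controlled coefficients, so the standard estimates, followed by Morrey's embedding $W^{2,p}\hookrightarrow C^{1,\alpha}$ and then the Schauder estimate, give
\[
\|\tilde f\|_{C^{2,\alpha}(B_1)}\le C\big(\|\tilde f\|_\infty+r^2(1+|x_0|)\big)\le C(1+|x_0|^2).
\]
Scaling back, $\nabla f=r^{-1}\nabla\tilde f$, so
\[
|\nabla f(x_0)|\le C(1+|x_0|)(1+|x_0|^2)\le C(1+|x_0|^3).
\]
Similarly $\nabla^2 f=r^{-2}\nabla^2\tilde f$, which gives $\|\nabla^2 f(x_0)\|\le C(1+|x_0|)^2(1+|x_0|^2)=C(1+|x_0|^4)$. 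These are exactly the exponents $3$ and $4$ claimed in Proposition~\ref{regularity}, which suggests this scaling is the intended route. The rescaling removes the need for the two-step bootstrap above.

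\textbf{Step 2: regularity and checks.} To apply the PDE estimates we need $f$ to be a $W^{2,p}_{\rm loc}$ (strong) solution, not merely a solution in the generator sense of Lemma~\ref{soluofstein}. I would get this by one of two routes:
\begin{itemize}
\item approximate $h$ by smooth $h_n$ and pass to the limit using the uniform local bounds;
\item invoke uniqueness of the Dirichlet problem on each ball, with boundary data $f$, which is continuous.
\end{itemize}
The remaining checks are routine: that the rescaled Hölder seminorm of $a$ does not increase (it is multiplied by $r^\alpha\le1$), and that all the Schauder constants depend only on $d,\sigma_0,L,\|\sigma\|_\infty$.
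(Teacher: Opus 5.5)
Your proposal is correct and is essentially the paper's argument. The paper applies the weighted interior Schauder estimate (Gilbarg--Trudinger, Thm.~6.2) on balls $B_{r(x)}(x)$ with $r(x)=\frac{1}{2(1+|x|)}$ and checks that $|b|^{(1)}_{0,1;B_{r(x)}(x)}$ is bounded uniformly in $x$; this is exactly the scale-invariant form of your rescaling $\tilde f(z)=f(x_0+rz)$ with bounded rescaled drift $r\,b(x_0+rz)$, and it yields the same factors $(1+|x|)$ and $(1+|x|)^2$ in front of $\sup_{B}|f|\lesssim 1+|x|^2$. Your Step~2, which justifies that $f$ is a genuine $C^{2,\alpha}_{\rm loc}$ solution (e.g.\ via the Dirichlet problem on balls plus a strong-Markov identification), covers a point the paper passes over, since its Schauder lemma assumes $u\in C^{2,\alpha}$ without checking this for $f$; the $W^{2,p}$/Morrey step is needed only for that regularity, not for the bounds themselves.
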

 \begin{prop}\label{logf}
Let Assumptions \ref{A0},\ref{A1} and \ref{A2} hold, then there exists some positive constant $C$ such that  for any $x\in\mathbb{R}^d$
 \begin{align*}
 \sup\limits_{\left\{y:0<|y-x|\leq\frac{1}{8}\right\}}\frac{||\nabla^2f(x)-\nabla^2f(y)||}{|x-y||\ln|x-y||}\leq C(1+|x|^5).
 \end{align*}
\end{prop}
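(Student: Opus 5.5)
We prove Proposition \ref{logf} by a localized Schauder-type argument. Fix $x_0\in\mathbb{R}^d$ and work on the ball $B_1(x_0)$. On this ball we view the Stein equation \eqref{stein} as a linear elliptic equation in nondivergence form:
\[
\frac12\langle a(x),\nabla^2 f(x)\rangle_{\rm HS}=g(x):=h(x)-\nu(h)-\langle b(x),\nabla f(x)\rangle,\qquad a=\sigma\sigma^\top .
\]
By Assumptions \ref{A0} and \ref{A1}, $a$ is uniformly elliptic with ellipticity constant $\sigma_0$. It is also bounded, and it is Lipschitz on $B_1(x_0)$ with a constant independent of $x_0$, since $\|a(x)-a(y)\|\le 2\|\sigma\|_\infty L|x-y|$.

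The right-hand side $g$ is Lipschitz on $B_1(x_0)$ with the following control:
\begin{itemize}
\item $h\in{\rm Lip}_0(1)$;
\item $b$ is Lipschitz with $|b(x)|\le C(1+|x_0|)$ on the ball;
\item $\nabla f$ is bounded by $C(1+|x_0|^3)$ and is Lipschitz with constant $C(1+|x_0|^4)$, by Proposition \ref{regularity}.
\end{itemize}
Hence
\[
[g]_{{\rm Lip}(B_1(x_0))}\le C\bigl(1+(1+|x_0|)(1+|x_0|^4)+|x_0|^3\bigr)\le C(1+|x_0|^5),
\]
and $|g|\le C(1+|x_0|^4)$ there. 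This is where the power $5$ in the statement comes from.

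Next we invoke the borderline Schauder estimate, first in the constant-coefficient case. For $\Delta u=F$ with $F$ Lipschitz (or merely Dini/log-Lipschitz data), the Hessian of the Newtonian potential is log-Lipschitz:
\[
|\nabla^2 u(x)-\nabla^2 u(y)|\le C\Bigl(\|u\|_{L^\infty(B_1)}+\|F\|_{L^\infty(B_1)}+[F]_{\rm Lip}\Bigr)|x-y|\,|\ln|x-y||
\]
for $x,y\in B_{1/2}$ with $|x-y|\le 1/8$. The restriction $1/8$ keeps $|\ln|x-y||\ge\ln 8>0$. Equivalently, one can use $C^{1,\alpha}$ data: $F\in C^{0,1}\subset C^{0,\alpha}$ gives $\nabla^2u\in C^{0,\alpha}$ for every $\alpha<1$, but the constant blows up like $1/(1-\alpha)$. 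The sharp statement is the log-Lipschitz bound obtained from the standard dyadic decomposition of the kernel $\partial_{ij}\Gamma$: on scale $r=|x-y|$, the near part contributes $O(r)$ and the far part contributes $O(r\ln(1/r))$.

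To handle variable coefficients we freeze them at $x_0$. Write
\[
\frac12\langle a(x_0),\nabla^2 f\rangle_{\rm HS}=g+\frac12\langle a(x_0)-a(x),\nabla^2 f\rangle_{\rm HS}.
\]
The perturbation term equals $\nabla^2 f$ times a Lipschitz factor. It is therefore Lipschitz on $B_{1/2}(x_0)$ provided $\nabla^2 f$ itself is Lipschitz there, which is circular. We break the circularity in two steps:
\begin{enumerate}
\item Apply the classical interior $C^{2,\alpha}$ Schauder estimate for Lipschitz $a$ with a fixed $\alpha\in(0,1)$. This gives $[\nabla^2 f]_{C^\alpha(B_{3/4}(x_0))}\le C(1+|x_0|^5)$, using $\|f\|_{L^\infty}\le C(1+|x_0|^2)$ from \eqref{linear} and $\|\nabla^2 f\|_{L^\infty}\le C(1+|x_0|^4)$ from Proposition \ref{regularity}.
\item With this, the frozen right-hand side is $C^\alpha$, and its product structure $(a(x_0)-a(x))\nabla^2 f(x)$ is small near $x_0$. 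We then run the dyadic/Campanato iteration of the log-Lipschitz estimate centered at each point $x$, freezing at $x$ rather than at $x_0$. On the ball of radius $\rho$, $|a(\cdot)-a(x)|\le C\rho$ times $|\nabla^2 f|$ gives an oscillation contribution $O(\rho)(1+|x_0|^4)$ per scale. Summing over $\log_2(1/r)$ dyadic scales produces exactly the factor $r|\ln r|$.
\end{enumerate}

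The main obstacle is carrying out this Campanato iteration cleanly while tracking the polynomial weights. The idea is to compare $f$ on $B_\rho(x)$ with the solution $w$ of the frozen-coefficient equation having the same boundary values. Then $\nabla^2 w$ satisfies
\[
\operatorname{osc}_{B_{\theta\rho}}\nabla^2 w\le C\theta\rho\cdot(\text{data}).
\]
The $W^{2,p}$ / $L^\infty$ error satisfies $\|\nabla^2(f-w)\|\lesssim\rho\,(1+|x_0|^5)$. Iterating over the scales $\rho_k=\theta^k$, the Hessian mean oscillation accumulates as $\sum_{k\le\log(1/r)}C\rho_k\cdot(1+|x_0|^5)/\rho_k\cdot r$, which is $\lesssim r|\ln r|\,(1+|x_0|^5)$. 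The remaining steps are routine: the $L^\infty$ control of $\nabla^2(f-w)$ at borderline regularity can be obtained via $W^{2,p}$ for large $p$ plus the $C^\alpha$ bound from step 1.
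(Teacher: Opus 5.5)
Your proposal follows the paper's proof: rewrite the Stein equation as $\frac12\langle\sigma\sigma^\top,\nabla^2 f\rangle_{\rm HS}=h-\nu(h)-\langle b,\nabla f\rangle$, bound the Lipschitz norm of this right-hand side on a ball of fixed radius around $x$ by $C(1+|x|^5)$ using Proposition~\ref{regularity} and the linear growth of $b$, and apply the borderline (log-Lipschitz) interior Schauder estimate, which the paper simply quotes as Lemma~\ref{keyPDE} from Du--Liu rather than sketching its proof as you do. If you write that sketch out, do not iterate $\operatorname{osc}_{B_{\theta\rho}}\nabla^2 f\le C_0\theta\,\operatorname{osc}_{B_\rho}\nabla^2 f+C\rho M$ directly, because for $d\ge 2$ the harmonic decay constant satisfies $C_0>1$ and that iteration only proves a H\"older modulus; the factor $r|\ln r|$ has to come, as in your final accounting, from summing the $O(M)$ third-derivative bounds of the harmonic differences $w_k-w_{k+1}$ of successive frozen-coefficient approximants over the $\log(1/r)$ scales.
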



	
	  

\section{Proof of Main results}\label{se4}

We organize the proofs of our main results, Theorems \ref{thm1} and \ref{thm2}, into the following two subsections. Throughout this section, we assume that the stochastic process $(X^m_t, Y^m_t)_{t \ge 0}$ is stationary. In the derivations below, $C$ denotes a generic positive constant that may change from line to line but is independent of $t$ and $m$.

\begin{proof}
Let $(X_0^m, Y_0^m) \sim \pi_m$ with $\pi_m$ being the invariant measure of \eqref{EQ1}, then $\{(X_t^m, Y_t^m)\}_{t \ge 0}$ is a stationary Markov process and $(X_t^m, Y_t^m) \sim \pi_m$ for all $t \ge 0$. 
For any given $h \in {\rm Lip_0(1)}$, let $f$ be the solution to Stein's equation \eqref{stein} and define
$$g(x,y)=\langle y,\nabla f(x)\rangle.$$
Denote by $\nu_m(dx)=\pi_m(dx,\mathbb{R}^d)$  the marginal distribution of $\pi_m$ with respect to the position variable $x$.
\vskip 2.5mm 
\underline{\emph{Step 1}}: To show that 
\begin{equation} \label{phixy}
\nu_m(h)-\nu(h)=\pi_m(\phi)
\end{equation}
where 
\begin{equation}
\phi(x, y)=\frac{1}{2}\langle\sigma(x)\sigma^T(x),\nabla^2f(x)\rangle_{\rm HS}-my^T\nabla^2f(x)y. 
\end{equation}
Here
$$\nu_m(h)=\int_{\mathbb{R}^{d}} h(x) \nu_m(dx), \ \ \ \ \pi_m(\phi)=\int_{\mathbb{R}^{2d}} \phi(x,y) \pi_m(dx,dy).$$

By applying It\^o's formula, we have for all $t\geq0$
\begin{equation} \label{e:ItoG}
\begin{split}
	0=&\mathbb{E}[g(X_t^m, Y_t^m)]-\mathbb{E}[g(X_0^m,Y_0^m)]\\
    =&\mathbb{E}[\int_0^t\mathcal{A}_m g(X_s^m,Y_s^m)ds]\\
	=&\int_0^t\mathbb{E}[(Y_s^m)^T\nabla^2f(X_s^m)Y_s^m+\langle\frac{1}{m}(b(X_s^m)-Y_s^m), \nabla f(X_s^m)\rangle] ds\\
	=&\int_0^t\mathbb{E}[(Y_s^m)^T\nabla^2f(X_s^m)Y_s^m+\frac{1}{m}\langle b(X_s^m), \nabla f(X_s^m)\rangle] ds.
\end{split}
\end{equation}
Here the last equality follows from the stationary identity: 
\begin{align}\label{ypartialx}
	\mathbb{E}\left[\mathcal{A}_m f(X^m_s,Y^m_s)\right]=\mathbb{E}\langle Y_s^m,\nabla f(X_s^m)\rangle=0,
\end{align} 
which is a consequence of the invariance of $\pi_m$ under the generator $\mathcal{A}_m$.

Therefore we get for all $t\geq0,$
\begin{equation} \label{e:BYGradf}
\mathbb{E}[\langle b(X_t^m), \nabla f(X_t^m)\rangle]=-m \mathbb{E}[(Y_t^m)^T\nabla^2f(X_t^m)Y_t^m\rangle].
\end{equation}

Now consider the Stein equation \eqref{stein} at $x=X^m_t$. By taking expectation and using \eqref{e:BYGradf}, we obtain that for all $t\geq0$
\begin{align*}
	\nu_m(h)-\nu(h)&=\mathbb{E}[h(X^m_t)]-\nu(h) \nonumber \\
    &=\mathbb{E}\left[\frac{1}{2}\langle\sigma(X^m_t)\sigma^T(X^m_t), \nabla^2f(X^m_t)\rangle_{\rm HS}+\langle b(X^m_t), \nabla f(X^m_t) \rangle\right] \nonumber \\
    &=\mathbb{E}\left[\frac{1}{2}\langle\sigma(X^m_t)\sigma^T(X^m_t), \nabla^2f(X^m_t)\rangle_{\rm HS}-m (Y_t^m)^T\nabla^2f(X_t^m)Y_t^m\right], \nonumber
\end{align*}
which implies \eqref{phixy}. 
\vskip 2mm

\underline{\emph{Step 2}}: To show that for all $t>0$, 
\begin{align} \label{e:PhiXtYt}
\pi_m(\phi)=\frac{m}{2t}[I_1+I_2+I_3+I_4+I_5+I_6],
\end{align}
where 
\begin{align*}
& I_1=-\mathbb{E}\langle\nabla^2f(X_t^m), \int_0^tY_s^mb^T(X_s^m)+b(X_s^m)(Y_s^m)^Tds\rangle_{\rm HS},\\
&I_2=-\mathbb{E}\langle\nabla^2f(X_t^m), \int_0^tY_s^m(\sigma(X_s)dB_s)^T+(\sigma(X_s)dB_s)(Y_s^m)^T\rangle_{\rm HS},\\
&I_3=2\mathbb{E}\int_0^t\langle\nabla^2f(X^m_t)-\nabla^2f(X^m_s), Y_s^m(Y_s^m)^T\rangle_{\rm HS}ds, \\
&I_4=-\frac{1}{m}\mathbb{E}\int_0^t\langle\nabla^2f(X_t^m)-\nabla^2f(X_s^m), \sigma(X_s^m)\sigma^T(X_s^m)\rangle_{\rm HS}ds,\\
&I_5=\frac{1}{2}\mathbb{E}\langle\sigma(X_t^m)\sigma^T(X_t^m)-\sigma(X_0^m)\sigma^T(X_0^m), \nabla^2f(X_t^m)\rangle_{\rm HS},
\\
&I_6=\mathbb{E}\langle\frac{1}{2}\sigma(X_0^m)\sigma^T(X_0^m)-mY_0^m(Y_0^m)^T, \nabla^2f(X_t^m)-\nabla^2f(X_0^m)\rangle_{\rm HS}.
\end{align*}
Using the stationarity of the process $\{(X^m_t,Y^m_t)\}_{t \ge 0}$, we have for any $t\geq0$
\begin{align*}
	0=\mathbb{E}[\phi(X_t^m, Y_t^m)-\phi(X_0^m, Y_0^m)]=J_1+J_2, 
\end{align*}
where $$J_1=\mathbb{E}[\phi(X_t^m, Y_t^m)-\phi(X_t^m, Y_0^m)],\ \  \ \ J_2=\mathbb{E}[\phi(X_t^m, Y_0^m)-\phi(X_0^m, Y_0^m)].$$
For $J_1$, it is easy to see that   
\begin{align*}
	J_1=&-m\mathbb{E}[(Y_t^m)^T\nabla^2f(X_t^m)Y_t^m-(Y_0^m)^T\nabla^2f(X_t^m)Y_0^m]\\
	=&-m\mathbb{E}[\langle\nabla^2f(X_t^m), Y_t^m(Y_t^m)^T-Y_0^m(Y_0^m)^T\rangle_{\rm HS}].
\end{align*}
By It\^o's formula, 
\begin{align*}
	Y_t^m(Y_t^m)^T-Y_0^m(Y_0^m)^T&=\frac 1m \int_0^t \left[Y_s^mb^T(X_s^m)+b(X_s^m)(Y_s^m)^T-2 Y^m_s (Y^m_s)^T+\frac 1m \sigma(X_s^m)\sigma^T(X_s^m)\right]ds\\
	&\ \ +\int_0^tY_t^m\frac{1}{m}(\sigma(X_s^m)dB_s)^T+\int_0^t\frac{1}{m}(\sigma(X_s^m)dB_s)(Y_s^m)^T.
\end{align*}
Consequently, 
\begin{align*}
	\phi(X_t^m, Y_t^m)-\phi(X_t^m, Y_0^m)&=-\langle\nabla^2f(X_t^m), \int_0^tY_s^mb^T(X_s^m)+b(X_s^m)(Y_s^m)^Tds\rangle_{\rm HS}\\
	&+2\langle\nabla^2f(X_t^m), \int_0^tY_s^m(Y_s^m)^Tds\rangle_{\rm HS}-\frac{1}{m}\langle\nabla^2f(X_t^m), \int_0^t\sigma(X_s^m)\sigma^T(X_s^m)ds\rangle_{\rm HS} \\
    &-\langle\nabla^2f(X_t^m), \int_0^tY_s^m(\sigma(X^m_s)dB_s)^T+(\sigma(X^m_s)dB_s)(Y_s^m)^T\rangle_{\rm HS}.
\end{align*}
We now rewrite the second and third terms as follows:
\begin{align*}
	&2\langle\nabla^2f(X_t^m), \int_0^tY_s^m(Y_s^m)^Tds\rangle_{\rm HS}-\frac{1}{m}\langle\nabla^2f(X_t^m), \int_0^t\sigma(X_s^m)\sigma^T(X_s^m)ds\rangle_{\rm HS}\\
	=&2\int_0^t\langle\nabla^2f(X^m_t)-\nabla^2f(X^m_s), Y_s^m(Y_s^m)^T\rangle_{\rm HS} ds+2\int_0^t\langle\nabla^2f(X^m_s), Y_s^m(Y_s^m)^T\rangle_{\rm HS} ds\\
	-&\frac{1}{m}\int_0^t\langle\nabla^2f(X_t^m)-\nabla^2f(X_s^m), \sigma(X_s^m)\sigma^T(X_s^m)\rangle_{\rm HS} ds-\frac{1}{m}\int_0^t\langle\nabla^2f(X_s^m), \sigma(X_s^m)\sigma^T(X_s^m)\rangle_{\rm HS} ds\\
	=&2\int_0^t\langle\nabla^2f(X^m_t)-\nabla^2f(X^m_s), Y_s^m(Y_s^m)^T\rangle_{\rm HS}ds-\frac{1}{m}\int_0^t\langle\nabla^2f(X_t^m)-\nabla^2f(X_s^m), \sigma(X_s^m)\sigma^T(X_s^m)\rangle_{\rm HS} ds\\
	-&\frac{2}{m}\int_0^t\phi(X_s^m, Y_s^m)ds, 
\end{align*}
where the last equality follows from the definition of $\phi$. 

Thus $J_1$  can be expressed as
\begin{align*}
	J_1=&\mathbb{E}[-\langle\nabla^2f(X_t^m), \int_0^tY_s^mb^T(X_s^m)+b(X_s^m)(Y_s^m)^Tds\rangle_{\rm HS}\\
	&-\langle\nabla^2f(X_t^m), \int_0^tY_s^m(\sigma(X_s)dB_s)^T+(\sigma(X_s)dB_s)(Y_s^m)^T\rangle_{\rm HS}\\
	&+2\int_0^t\langle\nabla^2f(X^m_t)-\nabla^2f(X^m_s), Y_s^m(Y_s^m)^T\rangle_{\rm HS}ds-\frac{1}{m}\int_0^t\langle\nabla^2f(X_t^m)-\nabla^2f(X_s^m), \sigma(X_s^m)\sigma^T(X_s^m)\rangle_{\rm HS}ds\\
	&-\frac{2}{m}\int_0^t\phi(X_s^m, Y_s^m)ds].
\end{align*}
For $J_2$, we have directly  
\begin{align*}
	J_2=&\frac{1}{2}\langle\sigma(X_t^m)\sigma^T(X_t^m), \nabla^2f(X_t^m)\rangle_{\rm HS}-\frac{1}{2}\langle\sigma(X_0^m)\sigma^T(X_0^m), \nabla^2f(X_0^m)\rangle_{\rm HS}\\
	&-m\langle\nabla^2f(X_t^m)-\nabla^2f(X_0^m), Y_0^m(Y_0^m)^T\rangle_{\rm HS}\\
	=&\frac{1}{2}\langle\sigma(X_t^m)\sigma^T(X_t^m)-\sigma(X_0^m)\sigma^T(X_0^m), \nabla^2f(X_t^m)\rangle_{\rm HS}
	\\&+\langle\frac{1}{2}\sigma(X_0^m)\sigma^T(X_0^m)-mY_0^m(Y_0^m)^T, \nabla^2f(X_t^m)-\nabla^2f(X_0^m)\rangle_{\rm HS}.
\end{align*}
Combining the expressions for $J_1$ and $J_2$ and using $J_1+J_2=0$, we obtain
\begin{align*}
&\mathbb{E}\frac{2}{m}\int_0^t\phi(X_s^m, Y_s^m)ds=\mathbb{E}[-\langle\nabla^2f(X_t^m), \int_0^tY_s^mb^T(X_s^m)+b(X_s^m)(Y_s^m)^Tds\rangle_{\rm HS}\\
-&\langle\nabla^2f(X_t^m), \int_0^tY_s^m(\sigma(X_s)dB_s)^T+(\sigma(X_s)dB_s)(Y_s^m)^T\rangle_{\rm HS}\\
+&2\int_0^t\langle\nabla^2f(X^m_t)-\nabla^2f(X^m_s), Y_s^m(Y_s^m)^T\rangle_{\rm HS}ds-\frac{1}{m}\int_0^t\langle\nabla^2f(X_t^m)-\nabla^2f(X_s^m), \sigma(X_s^m)\sigma^T(X_s^m)\rangle_{\rm HS}ds\\
+&\frac{1}{2}\langle\sigma(X_t^m)\sigma^T(X_t^m)-\sigma(X_0^m)\sigma^T(X_0^m), \nabla^2f(X_t^m)\rangle_{\rm HS}
\\+&\langle\frac{1}{2}\sigma(X_0^m)\sigma^T(X_0^m)-mY_0^m(Y_0^m)^T, \nabla^2f(X_t^m)-\nabla^2f(X_0^m)\rangle_{\rm HS}].
\end{align*}
By the stationary property of $\{(X^m_t,Y^m_t)\}_{t \ge 0}$, we have 
$$\mathbb{E}\frac{2}{m}\int_0^t\phi(X_s^m, Y_s^m)ds=\frac{2 t}{m} \mathbb{E} \phi(X_t^m, Y_t^m)=\frac{2 t}{m} \pi_m(\phi).$$
By the two equalities above, we finally get \eqref{e:PhiXtYt}. 

\vskip 2mm 
\underline{\emph{Step 3}}: To estimate $I_1$-$I_6$.
For $I_1$,  recall the following regularity estimate in Proposition~\ref{regularity}: $$||\nabla^2f(x)||\leq C(1+|x|^4),$$ 
we have
\begin{align*}
	|I_1| & \leq C \int_0^t\mathbb{E}(1+|X_t^m|^4)|b(X_s^m)||Y_s^m|ds \\
    & \leq C\int_0^t \sqrt{\mathbb{E}[(1+|X_t^m|^4)^2 |b(X_s^m)|^2]} \sqrt{\mathbb{E}|Y_s^m|^2} ds,
\end{align*}
where the second inequality follows from the Cauchy-Schwarz inequality.

By the linear growth property of $b$ and Lemma~\ref{XMYMesti}~(1), the first term in the integrand is bounded. By Lemma~\ref{XMYMesti}~(2), there exists some positive constant $C$ such that $\mathbb{E}|Y_s^m|^2\le \frac{C}{m}$. Thus we get \begin{align*}
	|I_1| \leq C\frac{t}{\sqrt{m}}.
\end{align*}

For $I_2$, we decompose it as 
$$I_2=I_{21}+I_{22}$$
where
\begin{align*}I_{21}&=-\mathbb{E}\langle\nabla^2f(X_t^m), \int_0^tY_s^m(\sigma(X^m_s)dB_s)^T\rangle_{\rm HS}, \\
I_{22}&=-\mathbb{E}\langle\nabla^2f(X_t^m), \int_0^t (\sigma(X^m_s)dB_s)(Y_s^m)^T\rangle_{\rm HS}.\end{align*}
Since 
	$\mathbb{E}\langle\nabla^2f(X_0^m), \int_0^tY_s^m(\sigma(X^m_s)dB_s)^T\rangle_{\rm HS}=0, $  we have
\begin{align*}
	|I_{21}|=&|\mathbb{E}\langle\nabla^2f(X_t^m), \int_0^tY_s^m(\sigma(X^m_s)dB_s)^T\rangle_{\rm HS}|\\
	=&|\mathbb{E}\langle\nabla^2f(X_t^m)-\nabla^2f(X_0^m), \int_0^tY_s^m(\sigma(X^m_s)dB_s)^T\rangle_{\rm HS}|\\
	\leq&\mathbb{E}||\nabla^2f(X_t^m)-\nabla^2f(X_0^m)|||\int_0^t\langle Y_s^m, \sigma(X^m_s)dB_s\rangle|\\
    \leq& C\sqrt{\mathbb{E}||\nabla^2f(X_t^m)-\nabla^2f(X_0^m)||^2} \sqrt{\int_0^t \mathbb{E}|\sigma(X^m_s)Y_s^m|^2 ds}
\end{align*}
where the last inequality is obtained by the Cauchy-Schwarz inequality and It\^{o}'s isometry. By Assumption \ref{A0} and Lemma \ref{XMYMesti}~(2), we have
\begin{align} \label{e:I21-First}
    {\int_0^t \mathbb{E}|\sigma(X^m_s)Y_s^m|^2 ds}\leq C {\int_0^t \mathbb{E}|Y_s^m|^2 ds} \le C\frac{t}{m}.
\end{align}

Next we estimate $\mathbb{E}||\nabla^2f(X_t^m)-\nabla^2f(X_0^m)||^2$. Split the expectation according to the size of $|X_t^m-X_0^m|$:
\begin{align*}
	\mathbb{E}||\nabla^2f(X_t^m)-\nabla^2f(X_0^m)||^2
    =&\mathbb{E}||\nabla^2f(X_t^m)-\nabla^2f(X_0^m)||^2\mathbb{I}_{\left\{|X_t^m-X_0^m|\geq\frac{1}{8}\right\}}\\
    &+\mathbb{E}||\nabla^2f(X_t^m)-\nabla^2f(X_0^m)||^2\mathbb{I}_{\left\{0<|X_t^m-X_0^m|<\frac{1}{8}\right\}}.
\end{align*}
For the first part, using the regularity estimate in Proposition~\ref{regularity} and the Cauchy-Schwarz inequality, we have
\begin{align*}
    &\mathbb{E}\|\nabla^2f(X_t^m)-\nabla^2f(X_0^m)\|^2\mathbb{I}_{\left\{|X_t^m-X_0^m|\geq\frac{1}{8}\right\}} \\
    \le &C\mathbb{E}\left[\left(\|\nabla^2f(X_t^m)\|^2+\|\nabla^2f(X_0^m)\|^2\right)\mathbb{I}_{\left\{|X_t^m-X_0^m|\geq\frac{1}{8}\right\}}\right] \\
    \leq & C\mathbb{E}[(1+|X_t^m|^4+|X_0^m|^4)\mathbb{I}_{\left\{|X_t^m-X_0^m|\geq\frac{1}{8}\right\}}]\\
    \leq & C\sqrt{\mathbb{E}(1+|X_t^m|^4+|X_0^m|^4)^2} \sqrt{\mathbb{P}\left\{|X_t^m-X_0^m|\geq1/8\right\}}.
\end{align*}
By using Lemma \ref{XMYMesti}~(1), (3) and Markov's inequality, we obtain
\begin{equation} \label{e:E>1/8}
\mathbb{E}\|\nabla^2f(X_t^m)-\nabla^2f(X_0^m)\|^2\mathbb{I}_{\left\{|X_t^m-X_0^m|\geq\frac{1}{8}\right\}}     \leq  C\sqrt{\mathbb{E}|X_t^m-X_0^m|^2} 
    \leq  C(t+m).
\end{equation}
For the second part, by Proposition \ref{logf}, we have
\begin{equation}
\begin{split}
   & \mathbb{E}\|\nabla^2f(X_t^m)-\nabla^2f(X_0^m)\|^2\mathbb{I}_{\left\{0<|X_t^m-X_0^m|<\frac{1}{8}\right\}} \\
    \leq &\mathbb{E} \left[\frac{\|\nabla^2f(X_t^m)-\nabla^2f(X_0^m)\|^2}{|X_t^m-X^m_0| |\ln |X_t^m-X^m_0||}|X_t^m-X^m_0| |\ln |X_t^m-X^m_0||\mathbb{I}_{\left\{0<|X_t^m-X_0^m|<\frac{1}{8}\right\}}\right] \\
   \le & C \mathbb{E}\left[(1+|X^m_t|^5) |X_t^m-X^m_0| |\ln |X_t^m-X^m_0||\right]. \\
\end{split}
\end{equation}
Applying H\"{o}lder’s inequality together with Lemmas~\ref{XMYMesti}~(1) and (4), we get
\begin{equation} \label{e:E<1/8}
    \mathbb{E}||\nabla^2f(X_t^m)-\nabla^2f(X_0^m)||^2\mathbb{I}_{\left\{0<|X_t^m-X_0^m|<\frac{1}{8}\right\}}\leq C(t+m)\left[1+|\ln(t+m)|^2\right].
\end{equation}
Combining \eqref{e:E>1/8} and \eqref{e:E<1/8}, we obtain
\begin{align} \label{e:ELab^2FDif}
    \mathbb{E}\|\nabla^2f(X_t^m)-\nabla^2f(X_0^m)\|^2\leq C(t+m)
    |\ln(t+m)|^2+C(t+m).
\end{align}
Finally, substituting \eqref{e:I21-First} and \eqref{e:ELab^2FDif} into the bound for $I_{21}$, we get
\begin{align*}
    |I_{21}|\leq C\frac{\sqrt{t}}{\sqrt{m}}[(\sqrt{t}+\sqrt{m})|\ln(t+m)|+\sqrt{t}+\sqrt{m}].
\end{align*}
The term $I_{22}$ can be estimated in exactly the same way and admits the same bound as $I_{21}$. Hence 
\begin{align*}
	|I_2|\leq C\frac{\sqrt{t}}{\sqrt{m}}[(\sqrt{t}+\sqrt{m})|\ln(t+m)|+\sqrt{t}+\sqrt{m}].
\end{align*}

For $I_3$ and $I_4$, by \eqref{e:ELab^2FDif} and Lemmas \ref{XMYMesti}~(2),  we have 
\begin{align*}
		|I_3|\leq&C\int_0^t\sqrt{\mathbb{E}||\nabla^2f(X_t^m)-\nabla^2f(X_s^m)||^2\mathbb{E}|Y_s^m|^4}ds\\
        \leq&C\frac{t}{m}[(\sqrt{t}+\sqrt{m})|\ln(t+m)|+\sqrt{t}+\sqrt{m}],
\end{align*}
while $I_4$ can be bounded by the same argument, yielding
\begin{align*}
&|I_4|\leq C\frac{t}{m}[(\sqrt{t}+\sqrt{m})|\ln(t+m)|+\sqrt{t}+\sqrt{m}].
\end{align*}

For $I_5$ and $I_6$, by Assumption \ref{A0} and Proposition \ref{regularity} and Lemma \ref{XMYMesti}~(3),  
\begin{align*}
		|I_5|\leq C\mathbb{E}[(1+|X_t^m|^4)|X_t^m-X_0^m|]\leq C\sqrt{\mathbb{E}|X_t^m-X_0^m|^2}\leq C(\sqrt{t}+\sqrt{m}),
\end{align*}
moreover,
by Assumption \ref{A0}, Lemmas \ref{XMYMesti}~(2)(3) and \eqref{e:ELab^2FDif}, we get
\begin{align*}
		|I_6|\leq& C\sqrt{\mathbb{E}\|\sigma(X_0^m)\sigma^T(X_0^m)\|^2_{\rm HS}+m^2\mathbb{E}\|Y_0^m(Y_0^m)^T\|_{\rm HS}^2}\sqrt{\mathbb{E}\|\nabla^2f(X_t^m)-\nabla^2f(X_0^m)\|_{\rm HS}^2}\\
        \leq&C\sqrt{\mathbb{E}\|\nabla^2f(X_t^m)-\nabla^2f(X_0^m)\|_{\rm HS}^2}\leq C[(\sqrt{t}+\sqrt{m})|\ln(t+m)|+\sqrt{t}+\sqrt{m}]
\end{align*}
\vskip 2.5mm
\underline{\emph{Step 4}}: Completion of the proof.  
Combining the estimates for $I_1$-$I_6$, we have \begin{align*}
	&|\nu_m(h)-\nu(h)|=|\pi_m(\phi)|\\
    \le& \frac{m}{2t}[|I_1|+|I_2|+|I_3|+|I_4|+|I_5|+|I_6|] \\
     \le &C[\sqrt{m}+(\frac{\sqrt{m}}{\sqrt{t}}+1+\frac{m}{t})[(\sqrt{t}+\sqrt{m})|\ln(t+m)|+\sqrt{t}+\sqrt{m}]+\frac{m}{t}(\sqrt{t}+\sqrt{m})]
\end{align*}
Taking $t=m$, we get
\begin{align*}
	|\nu_m(h)-\nu(h)|
	\leq C[\sqrt{m}+\sqrt{m}|\ln m|].
\end{align*} 
As $m\leq e^{-1}, $ it yields
\begin{align*}
	|\nu_m(h)-\nu(h)|\leq C\sqrt{m}|\ln m|,
\end{align*}
which completes the proof by Kantorovich duality.
\end{proof}

\subsection{Proof of Theorem \ref{thm2}  (One-dimensional case)}
\begin{proof}
Let $h\in {\rm Lip}_0(1)\cap C^1(\mathbb{R})$, then $||h'||_\infty\leq1.$	
Consider the Stein equation $$\mathcal{A}f(x)=h(x)-\nu(h), \ \forall x\in\mathbb{R},$$
where $\mathcal{A}f(x)=b(x)f'(x)+\frac12 \sigma^2(x)f''(x)$.

By Lemma \ref{soluofstein}, 
$$f(x)=-\int_0^\infty P_t[h(x)-\nu(h)]dt.$$
Differentiating the Stein equation $\eqref{stein}$, one  yields:
\begin{align*}
	b'(x)f'(x)+b(x)f''(x)+\sigma(x)\sigma'(x)f''(x)+\frac{1}{2}\sigma^2(x)f'''(x)=h'(x),
\end{align*}
and 
\begin{align*}
	f'''(x)=\frac{2[h'(x)-b'(x)f'(x)-b(x)f''(x)-\sigma(x)\sigma'(x)f''(x)]}{\sigma^2(x)}
\end{align*}
By Assumptions \ref{A0}, \ref{A1},  Proposition \ref{regularity},  we obtain the key estimate:
\begin{align}\label{third}
	|f'''(x)|\leq C(1+|x|^5).
\end{align}

Now let $(X_0^m, Y_0^m) \sim \pi_m$ with $\pi_m$ being the invariant measure of \eqref{EQ1}, then $\{(X_t^m, Y_t^m)\}_{t \ge 0}$ is a stationary Markov process and $(X_t^m, Y_t^m) \sim \pi_m$ for all $t \ge 0$.
From identity \eqref{phixy}, we have
	\begin{align*}
		\nu_m(h)-\nu(h)=\mathbb{E}[\phi(X_t^m,Y_t^m)],
	\end{align*}
where $\phi(x, y)=\frac{1}{2}\sigma^2(x)f''(x)-m y^2f''(x).$ 

The infinitesimal generator $\mathcal{A}_m\phi$ of the two-dimensional kinetic process acts on $\phi$ as
	\begin{align*}
		\mathcal{A}_m\phi(x, y)=&y\partial_x\phi(x, y)-\frac{1}{m}m(b(x)-y)(2f''(x)y)-\frac{1}{2m}(2\sigma^2(x)f''(x))\\
		=&y\partial_x\phi(x, y)-2b(x)f''(x)y+2y^2f''(x)-\frac{1}{2m}(2\sigma^2(x)f''(x))\\
		=&y\partial_x\phi(x, y)-2b(x)f''(x)y+\frac{2}{m}(my^2f''(x)-\frac{1}{2}\sigma^2(x)f''(x))\\
		=&y\partial_x\phi(x, y)-2b(x)f''(x)y-\frac{2}{m}\phi(x, y).
	\end{align*}
	By the stationary property of $(X^m_t,Y^m_t)$, we have 
    $\mathbb{E}\mathcal{A}_m\phi(X^m_t, Y^m_t)=0$ for all $t \ge 0$ and thus
    $$\mathbb{E}\phi(X_t^m, Y_t^m)
	=\mathbb{E}[\frac{m}{2}Y_t^m\partial_x\phi(X_t^m, Y_t^m)-mb(X_t^m)f''(X_t^m)Y_t^m].
	$$
Hence, 
\begin{align*}
	\nu_m(h)-\nu(h)&=\mathbb{E}[\frac{m}{2}Y_t^m\partial_x\phi(X_t^m, Y_t^m)-mb(X_t^m)f''(X_t^m)Y_t^m]\\
	&=\mathbb{E}[Y_t^m(\sigma^2(X_t^m)f''(X_t^m))'-\frac{m^2}{2}(Y_t^m)^3f'''(X_t^m)-mb(X_t^m)f''(X_t^m)Y_t^m].
\end{align*}
Noting that $$\mathbb{E}[Y_t^m(\sigma^2(X_t^m)f''(X_t^m))']=\mathbb{E}\left[\mathcal{A}_m \left(\sigma^2(X_t^m)f''(X_t^m))\right)\right]=0,$$  
we obtain $$\nu_m(h)-\nu(h)=-\mathbb{E}[\frac{m^2}{2}(Y_t^m)^3f'''(X_t^m)+mb(X_t^m)f''(X_t^m)Y_t^m].$$  

Therefore,  by Proposition \ref{regularity},  Lemma \ref{XMYMesti}~(1) and (2),   Assumption \ref{A0},  \eqref{third} and H\"older's inequality,  we have
\begin{align*}
	|\nu_m(h)-\nu(h)|\leq& \frac{m^2}{2}\mathbb{E}[|Y_t^m|^3|f'''(X_t^m)|]+m\mathbb{E}[|b(X_t^m)||f''(X_t^m)||Y_t^m|]\\
	\leq& Cm^2\mathbb{E}[|Y_t^m|^3(1+|X_t^m|^5)]+m\mathbb{E}[|b(X_t^m)||Y_t^m|(1+|X_t^m|^4)]\\
    \leq&Cm^2\mathbb{E}[|Y_t^m|^3(1+|X_t^m|^5)]+m\mathbb{E}[|Y_t^m|(1+|X_t^m|^5)]\\
	\leq&C[m^2(\mathbb{E}|Y_t^m|^4)^\frac{4}{3}(\mathbb{E}(1+|X_t|)^5)^\frac{1}{4}+m\sqrt{\mathbb{E}(1+|X_t^m|^5)^2}\sqrt{\mathbb{E}|Y_t^m|^2}]\\
	\leq& C[m^2(\mathbb{E}|Y_t^m|^4)^\frac{4}{3}+m\sqrt{\mathbb{E}|Y_t^m|^2}]\\
	\leq& C[m^2m^{-\frac{3}{2}}+mm^{-\frac{1}{2}}]\\
	\leq&C\sqrt{m}.
\end{align*}
A standard density argument extends this estimate to every $h\in{\rm Lip}_0(1).$

Taking the supremum over all such $h$ and using the Kantorovich duality for the $1$-Wasserstein distance, we obtain the desired result.
\end{proof}
\section{Proof of ergodicity of Eq.\eqref{EQ1} and Eq.\eqref{EQ2}}\label{seerg}

\subsection{Proof of Proposition \ref{prop2}}

\begin{proof}[Proof of Proposition \ref{prop2}]

 We establish the exponential ergodicity of system \eqref{EQ1} by verifying a Lyapunov condition and using well-known results from the theory of Markov processes.

Consider the Lyapunov function $$V_m(x,y)=2|x|^2+6m^2|y|^2+4m\langle x,y\rangle.$$
A direct calculation shows that 
	\begin{equation}\label{Lyapunov}
		0\leq\frac{1}{8}V_m(x,y)\leq |x|^2+m^2|y|^2\leq V_m(x,y).
	\end{equation}
Applying the generator \(\mathcal{A}_m\) to \(V_m\) and using Assumptions \ref{A0}--\ref{A2}, we obtain
	\begin{align*}
		\mathcal{A}_mV_m(x, y)=&-8m|y|^2+12m\langle b(x), y\rangle+4\langle b(x), x\rangle+6\langle\sigma(x), \sigma(x)\rangle_{\rm HS}\\
		\leq&(6mL_b-4c_2)|x|^2-\frac{2}{m}m^2|y|^2+4c_1+6mL_b+6||\sigma||^2_\infty.
	\end{align*}
    For \(m\le\min\bigl\{\frac{c_2}{2L_b},\frac{2}{c_2},1\bigr\}\) we have \(6mL_b\le 3c_2\) and \(1/m\ge c_2/2\); hence
\[
\mathcal{A}_m V_m(x,y)
\le -c_2\bigl(|x|^2+m^2|y|^2\bigr)+4c_1+3c_2+6\|\sigma\|_\infty^2.
\]
Using \eqref{Lyapunov} this becomes
\begin{equation}\label{AMVM}
    \mathcal{A}_m V_m(x,y)\le -\frac{c_2}{8}\,V_m(x,y)+C,
\end{equation}
where \(C:=4c_1+3c_2+6\|\sigma\|_\infty^2\). The inequality \eqref{AMVM} is a standard dissipative Lyapunov condition, which, together with Kolmogorov backward equation, implies 
$$\frac{d}{dt} \mathbb{E}V_m(X_t^m,Y_t^m)= \mathcal{A}_m \mathbb{E}V_m(X_t^m,Y_t^m) \le -\frac{c_2}{8}\,\mathbb{E}V_m(X_t^m,Y_t^m)+C.$$
Hence,  
\begin{equation}\label{XYbounded}
    \mathbb{E}V_m(X_t^m,Y_t^m)\le e^{-c_2 t/8}\,\mathbb{E}V_m(X_0^m,Y_0^m)+C.
\end{equation}
Consequently, the solution \((X_t^m,Y_t^m)\) exists for all time and its moments are uniformly bounded in time when started from a distribution with finite moments.

To obtain exponential convergence to a unique invariant measure, we need to verify that the semigroup generated by \eqref{EQ1} is strongly Feller and that the process is irreducible.  Since the Harnack inequality implies both the strong Feller property and the irreducibility \cite{Wang2013}, we only need to show that the SDE \eqref{EQ1} satisfies this inequality.  Although the diffusion coefficient \(\sigma\) depends on the position variable \(x\), the uniform ellipticity and boundedness of \(\sigma\) guarantee that the Harnack inequality proved in \cite[Theorem 1.1]{WaZh13} remains valid for system \eqref{EQ1} after minor modifications. The dependence on \(x\) requires only technical adjustments in the Malliavin calculus and control arguments used in \cite{WaZh13}; the core estimates are unchanged. In particular, Assumption (H) of \cite{WaZh13} holds with respect to the Lyapunov function \(V_m\). The regularity assumptions on the coefficients therein can be removed by a standard approximation and limiting procedure.

Now to show the ergodicity of SDE \eqref{EQ1} by the results in \cite[Theorem 2.4]{Wu2001}, it remains to prove that there exists a Lyapunov function $V\geq1,$  a compact $K$ and positive constants $\varepsilon,C$ such that 
\begin{align*}
    -\frac{\mathcal{A}_mV}{V}\geq \varepsilon\mathbb{I}_{K^C}-C\mathbb{I}_{K}.
\end{align*}


Let $\Tilde{V}_m(x,y):=V_m(x,y)+1\geq1.$ By \eqref{AMVM}, we have 
\begin{align*}
    \mathcal{A}_m\Tilde{V}_m(x,y)\leq&-c_2(|x|^2+m^2|y|^2+1)+4c_1+c_2+6L_b+6||\sigma||^2_\infty\\
    \leq&[-c_2(|x|^2+m^2|y|^2+1)+c']\mathbb{I}_{K_m}(x,y)+[-c_2(|x|^2+m^2|y|^2+1)+c']\mathbb{I}_{K_m^C}(x,y).
\end{align*}
where $c':=4c_1+c_2+6L_b+6||\sigma||^2_\infty$, and  $K_m:=\left\{(x,y)||x|^2+m^2|y|^2+1\leq \frac{2c'}{c_2}\right\}$ is a compact set. 

On the one hand, it is easy to see that
\begin{align*}
    [-c_2(|x|^2+m^2|y|^2+1)+c']\mathbb{I}_{K_m}\leq c'\mathbb{I}_{K_m}\leq c'\Tilde{V}_m(x,y)\mathbb{I}_{K_m}(x,y).
\end{align*}
On the other hand, by the definition of $K$ and \eqref{Lyapunov}, we have
\begin{align*}
[-c_2(|x|^2+m^2|y|^2+1)+c']\mathbb{I}_{K_m^C}(x,y)&\leq-\frac{c_2}{2}(|x|^2+m^2|y|^2+1)\mathbb{I}_{K_m^C}(x,y)\\
&\leq-\frac{c_2}{16}\Tilde{V}_m(x,y)\mathbb{I}_{K_m^C}(x,y).
\end{align*}
Combing the above estimates together, we obtain
\begin{align*}
    \mathcal{A}_m\Tilde{V}_m(x,y)\leq c'\Tilde{V}_m(x,y)\mathbb{I}_{K_m}(x,y)-\frac{c_2}{16}\Tilde{V}_m(x,y)\mathbb{I}_{K_m^C}(x,y),
\end{align*}
which implies $$-\frac{\mathcal{A}_m\Tilde{V}_m(x,y)}{\Tilde{V}_m(x,y)}\geq\frac{c_2}{16}\Tilde{V}_m(x,y)\mathbb{I}_{K_m^C}(x,y)-c'\Tilde{V}_m(x,y)\mathbb{I}_{K_m}(x,y).$$

Then, by \cite[Theorem 2.4]{Wu2001}, there exists a unique invariant measure $\pi_m$,  constants $D_m>0$ and $\rho_m\in(0,1)$ such that for all $t\geq0$
$$\sup\limits_{|f|\leq\Tilde{V}_m}|P^m_tf(x,y)-\pi_m(f)|\leq D_m\Tilde{V}_m(x,y)\rho_m^t.$$
\end{proof}

\subsection{Proof of Proposition \ref{egeq2}}

\begin{proof}
 Let $V(x)=1+|x|^2.$  By Assumptions \ref{A1} and \ref{A2}, we have
 \begin{align}
     \mathcal{A}V(x)=&2\langle b(x),x\rangle+||\sigma(x)||^2\leq 2c_1-2c_2|x|^2+||\sigma||^2_\infty\notag\\
     =&-2c_2(1+|x|^2)+2c_1+2c_2+||\sigma||^2_\infty.\label{AVX}
 \end{align}
 From \cite[Theorem 6.1]{MT3}, there exist positive constants $\kappa_1$ and $\kappa_2$ such that 
 \begin{align*}
     \sup\limits_{|f|\leq 1+V}|P_tf(x)-\nu(f)|\leq \kappa_1 V(x)e^{-\kappa_2 t}.
 \end{align*}
 For any 1-Lipschitz function $h$ with $h(0)=0,$ we have 
 \begin{align*}
     |h(x)|\leq|x|\leq1+|x|^2.
 \end{align*}
 Therefore,
 \begin{align*}
     |P_th(x)-\nu(h)|\leq \kappa_1 (1+|x|^2)e^{-\kappa_2 t}.
 \end{align*}
 By  Kantorovich duality, we obtain \eqref{eg1}.
\end{proof}

\section{Proof of Lemmas \ref{XMYMesti} and \ref{MOMX}}\label{se5}

\begin{proof}[Proof of Lemma \ref{XMYMesti}]

    \underline{\emph{Proof of {\rm (1)}}}:
	Recall $(X_0^m, Y_0^m)\sim\pi_m$ with $\pi_m$ being the ergodic measure of the system \eqref{EQ1}. By \eqref{XYbounded},  there exists some positive constant $C$ such that $$ \mathbb{E}V_m(X_t^m,Y_t^m)\leq C,\ \forall t\geq0.$$

By a straightforward computation,  we have for any integer $p\geq2,$
	\begin{equation*}
		\mathcal{A}_mV^p_m(x,y)=p V^{p-1}_m(x, y)\mathcal{A}_mV_m(x,y)+\frac{p(p-1)}{2m^2}V_m^{p-2}(x,y)|\sigma^T(x)(12m^2y+4mx)|^2.
	\end{equation*}
	By Assumption \ref{A1} and \eqref{AMVM} above,  it yields
	\begin{align*}
		\mathcal{A}V_m^p(x,y)\leq&-\frac{c_2 p}{8}V^p_m(x,y)+CV^{p-1}_m(x,y)+
        \frac{p(p-1)}{2m^2}||\sigma||^2_\infty m^2V_m^{p-2}(x,y)|3my+x|^2\\
		\leq&-\frac{c_2 p}{8}V^p_m(x,y)+CV^{p-1}_m(x,y)+CV^{p-2}_m(x,y)(9m^2|y|^2+|x|^2)\\
		\leq&-\frac{c_2 p}{8}V^p_m(x,y)+CV^{p-1}_m(x,y).
	\end{align*}
	Since $(X_t^m, Y_t^m)\sim\pi_m, $ we get
	$$\mathbb{E}[V^p_m(X_t^m, Y_t^m)]\leq C_p\mathbb{E}V^{p-1}_m(X_t^m,Y_t^m).$$
	Therefore, for any $p\in\mathbb{N},$ according to 
	$\mathbb{E}|X_t^m|^{2p}\leq\mathbb{E}[V^p_m(X_t^m, Y_t^m)], $ we get (1).

\underline{\emph{Proof of {\rm (2)}}}:
	We now establish the moment bounds for $|Y_t^m|^2$ and $|Y_t^m|^4$. Recall that under the invariant distribution $\pi_m$, the process $(X_t^m, Y_t^m)$  is stationary, hence all expectations below are independent of $t$. 

Applying It\^o's formula, we have 
	\begin{align*}
		0=d\mathbb{E}|Y_t^m|^2=\frac{2}{m}\mathbb{E}\langle Y_t^m, b(X_t^m)-Y_t^m\rangle dt+\frac{1}{2m^2}\mathbb{E}||\sigma(X_t^m)||^2dt,
	\end{align*}
	which implies
	\begin{align*}
		\mathbb{E}|Y_t^m|^2=\mathbb{E}\langle Y_t^m, b(X_t^m)\rangle+\frac{1}{4m}\mathbb{E}||\sigma(X_t^m)||^2.
	\end{align*}

     For later use we introduce the notation
 $$\gamma_b^p:=\sup_{t\geq0}\mathbb{E}|b(X_t^m)|^p, \ \ \ \ p \in \mathbb{N}.$$ 
 Under Assumption \ref{A0} and the moment bound in (1),  we have $\gamma_b^p<\infty$ and does not depend on $m$.  
Using Young's inequality,  we get
	\begin{equation*}
		\mathbb{E}|Y_t^m|^2\leq\frac{1}{2}\mathbb{E}[|Y_t^m|^2+|b(X_t^m)|^2]+\frac{||\sigma||^2_\infty}{4m},
	\end{equation*}
	which yields
	\begin{equation*}
		\mathbb{E}|Y_t^m|^2\leq\gamma_b^2+\frac{||\sigma||^2_\infty}{2m}.
	\end{equation*}

	The same arguments above apply for the fourth-moment.  By It\^o's formula, we have
	\begin{align*}
		d\mathbb{E}|Y_t^m|^4=\frac{8}{m}\mathbb{E}|Y_t^m|^2\langle Y_t^m, b(X_t^m)-Y_t^m\rangle dt+\frac{1}{m^2}\mathbb{E}[4|\sigma(X_t^m)^TY_t^m|^2+2|Y_t^m|^2||\sigma(X_t^m)||^2]dt.
	\end{align*}
	Since $d\mathbb{E}|Y_t^m|^4=0$ by the stationarity, and by applying Young's inequality, we get
	\begin{align*}
		\mathbb{E}|Y_t^m|^4=&\mathbb{E}|Y_t^m|^2\langle Y_t^m, b(X_t^m)\rangle+\frac{1}{2m}\mathbb{E}|\sigma(X_t^m)^TY_t^m|^2+\frac{1}{4m}\mathbb{E}|Y_t^m|^2
		||\sigma(X_t^m)||^2\\
		\leq&\frac{3}{4}\mathbb{E}|Y_t^m|^4+\frac{1}{4}\mathbb{E}|b(X_t^m)|^4+\frac{||\sigma||^2_\infty}{2m}\mathbb{E}|Y_t^m|^2+\frac{3||\sigma||^2_\infty}{4m}\mathbb{E}|Y_t^m|^2.
	\end{align*}
For $m<1, $ we have
	\begin{align*}
		\mathbb{E}|Y_t^m|^4\leq\gamma_b^4+\frac{3||\sigma||^2_\infty}{m}(\gamma_b^2+\frac{||\sigma||^2_\infty}{2m})\leq\gamma_b^4+\frac{||\sigma||^2_\infty}{2m^2}(\gamma_b^2+||\sigma||^2_\infty).
	\end{align*}
    Similar arguments apply to the case of $p>4$ by induction.
    
\underline{\emph{Proof of {\rm (3)}}}:
	We denote $\delta_t^m:=X_t^m-X_0^m.$ By \cite{F},  the solution of \eqref{EQ1} can be written as 
	\begin{align*}
		X_t^m=X_0^m+mY_0^m(1-e^{-\frac{t}{m}})+\int_0^t(1-e^{-\frac{t-s}{m}})b(X_s^m)ds+\int_0^t(1-e^{-\frac{t-s}{m}})\sigma(X_s^m)dB_s.
	\end{align*}
Then, 
	\begin{align*}
		|\delta_t^m|^p\leq& C[m^p|Y_0^m|^p(1-e^{-\frac{t}{m}})^p+|\int_{0}^{t}(1-e^{-\frac{t-s}{m}})b(X_s^m)ds|^p\\
		+&|\int_0^t\sigma(X_s^m)dB_s|^p+|\int_0^te^{-\frac{t-s}{m}}\sigma(X_s^m)dB_s|^p].
	\end{align*}
	By H\"older's inequality and the finiteness of $\gamma_b^p,$ 
	\begin{align*}
		\mathbb{E}|\int_0^t(1-e^{-\frac{t-s}{m}})b(X_s^m)ds|^p\leq Ct^p.
	\end{align*}
	By BDG's inequality, 
	\begin{align*}
		\mathbb{E}|\int_0^t\sigma(X_s^m)dB_s|^p\leq C\mathbb{E}[\int_{0}^{t}||\sigma(X_s^m)||^2ds]^\frac{p}{2}\leq Ct^\frac{p}{2}, 
	\end{align*}
and
	\begin{align*}
		\mathbb{E}|\int_0^te^{-\frac{t-s}{m}}\sigma(X_s^m)dB_s|^p\leq Cm^\frac{p}{2}.
	\end{align*}
	Noting that $\mathbb{E}|Y_0^m|^p\leq\frac{C}{m^\frac{p}{2}} $ and combining the above estimates together,
	we have
	\begin{equation}\label{aa}
		\mathbb{E}|\delta_t^m|^p\leq C(m^\frac{p}{2}+t^\frac{p}{2}).
	\end{equation}

\underline{\emph{Proof of {\rm (4)}}}: It is sufficient to prove the results when $p$ is even.  Let  $Z:=|\delta_t^m|^p, $ then
	\begin{align*}
\mathbb{E}|\delta_t^m|^p|\ln|\delta_t^m||^p=\frac{1}{p^p}\mathbb{E}Z|\ln Z|^p.
	\end{align*}
	Let $\alpha:=t^\frac{p}{2}+m^\frac{p}{2}, W:=\frac{Z}{\alpha}$. By \eqref{aa}, we have $E[W^2]+\mathbb{E}[W]\leq C.$ 
	Therefore, 
	\begin{align*}
		\mathbb{E} Z|\ln Z|^p\leq\alpha\mathbb{E}[W(|\ln\alpha|+|\ln W|)^p]\leq C\alpha\ln^p\alpha+\alpha\mathbb{E}[W|\ln W|^p].
	\end{align*}
For the second term, 
	\begin{align*}
		\mathbb{E}[W|\ln W|^p]=\mathbb{E}[W|\ln W|^p\mathbb{I}_{W<1}]+\mathbb{E}[W|\ln W|^p\mathbb{I}_{W\geq1}].
	\end{align*}
	Since $f(x)=x(\ln x)^p$ is bounded on $(0, 1), $ we have 
	\begin{align*}
		\mathbb{E}[W|\ln W|^p\mathbb{I}_{W<1}]\leq C.
	\end{align*}
	On the other hand, because $(\ln x)^p\leq C(x+1)$ on $x\geq1, $ we get
	\begin{align*}
		\mathbb{E}[W|\ln W|^p\mathbb{I}_{W\geq1}]\leq C\mathbb{E}[W(W+1)\mathbb{I}_{W\geq1}]\leq C\mathbb{E}[W(W+1)]\leq C.
	\end{align*}
	 Combining these estimates together, we obtain 
	\begin{align*}
		\mathbb{E}|\delta_t^m|^p|\ln|\delta_t^m||^p\leq C\alpha|\ln\alpha|^p+C\alpha=C(t^\frac{p}{2}+m^\frac{p}{2})|\ln(t+m)|^p+C(t^\frac{p}{2}+m^\frac{p}{2}).
	\end{align*}
\end{proof}

\begin{proof}[Proof of Lemma \ref{MOMX}]
	From \eqref{AVX}, we have
	\begin{equation*}
		\mathbb{E}V(X_t)\leq e^{-2c_2t}\mathbb{E}V(X_0)+\frac{2c_1+2c_2+||\sigma||^2_\infty}{2c_2}(1-e^{-2c_2t}).
	\end{equation*}
	If $X_0\sim\nu$, then $\mathbb{E}V(X_t)=\mathbb{E}V(X_0)$ , and hence $$\mathbb{E}V(X_t)\leq\frac{2c_1+2c_2+||\sigma||^2_\infty}{2c_2}.$$
    Since $|x|^2\leq V(x),$ the proof is finished.
\end{proof}

\section{Proof of Proposition \ref{regularity} and \ref{logf}}\label{se66}
\subsection{Notation}

For $\alpha\in(0,1], $  a function $f$ is said to be $\alpha$-H\"older continuous in $\mathbb{R}^d$ if the seminorm $$[f]_\alpha:=\sup\limits_{x\neq y}\frac{|f(x)-f(y)|}{|x-y|^\alpha}$$is finite.  We denote by $C^\alpha(\mathbb{R}^d)$  space of all functions with finite $C^\alpha$-norm. For  a non‑negative integer $k$ and $\alpha\in(0,1],$ the H\"older space $C^{k,\alpha}(\mathbb{R}^d)$ is the subspace of $C^k(\mathbb{R}^d)$ consisting of functions whose $k-$th order partial derivatives are $\alpha-$H\"older continuous. Let $C^{k,\alpha}_b(\mathbb{R}^d)$ be the space of bounded functions in $C^{k,\alpha}$.

We adopt the following notations from \cite{PDE}. Let $\mathcal{D}$ be an open subset of $\mathbb{R}^d.$ For integers $k=0,1,2$, $\alpha\in(0,1],$ $\tau\geq0$, define
\begin{align*}
[f]_{k,0;\mathcal{D}}&=[f]_{k;\mathcal{D}}=\sup\limits_{x\in\mathcal{D}}|\nabla^kf(x)|,\\
[f]_{k,\alpha;\mathcal{D}}&=\sup\limits_{\substack{x,y\in\mathcal{D}, \\x\neq y}}\frac{|\nabla^kf(x)-\nabla^kf(y)|}{|x-y|^\alpha},\\
|f|_{k;\mathcal{D}}&=\sum_{j=0}^k[f]_{j,0;\mathcal{D}},\\
|f|_{k,\alpha;\mathcal{D}}&=|f|_{k;\mathcal{D}}+[f]_{k,\alpha;\mathcal{D}},\\
[f]^{(\tau)}_{k,0;\mathcal{D}}&=[f]^{(\tau)}_{k;\mathcal{D}}=\sup\limits_{x\in\mathcal{D}}d_x^{k+\tau}|\nabla^kf(x)|,\\
[f]^{(\tau)}_{k,\alpha;\mathcal{D}}&=\sup\limits_{\substack{x,y\in\mathcal{D}, \\x\neq y}}d^{k+\alpha+\tau}_{x,y}\frac{\nabla^kf(x)-\nabla^kf(y)}{|x-y|^\alpha},\\
|f|^{(\tau)}_{k;\mathcal{D}}&=\sum_{j=0}^k[f]^{(\tau)}_{j;\mathcal{D}},\\
|f|^{(\tau)}_{k,\alpha;\mathcal{D}}&=|f|^{(\tau)}_{k;\mathcal{D}}+[f]^{(\tau)}_{k,\alpha;\mathcal{D}},
\end{align*}
where $d_x=dist(x,\partial\mathcal{D})$ and $d_{x,y}=\min(d_x,d_y).$ When $\tau=0,$ write $$[f]_{k;\mathcal{D}}^*:=[f]^{(0)}_{k;\mathcal{D}},\ [f]^*_{(k,\alpha;\mathcal{D})}:=[f]^{(0)}_{k,\alpha;\mathcal{D}}, \ |f|^*_{k,\alpha;\mathcal{D}}:=|f|^{(0)}_{k,\alpha;\mathcal{D}}.$$ 
It follows form \cite[(6.11)]{PDE} that
\begin{align}\label{pdeineq}
|fg|^{\tau_1+\tau_2}_{0,\alpha;\mathcal{D}}\leq|f|^{(\tau_1)}_{0,\alpha;\mathcal{D}}|g|^{(\tau_2)}_{0,\alpha;\mathcal{D}} \text{ for } \tau_1, \tau_2\geq0.
\end{align}

The following classical result is quoted from \cite[Theorem 6.2]{PDE}.
\begin{lem}[Schauder interior estimates]\label{Schauder}
Let $\mathcal{D}$ be a open subset of $\mathbb{R}^d,$ and let $u\in C_b^{2,\alpha}(\mathcal{D})$ be a solution in $\mathcal{D}$ of the equation 
\begin{align*}
\langle a(x),\nabla^2u(x)\rangle_{HS}+\langle b(x),\nabla u(x)\rangle=f,
\end{align*}
where $f\in C^{\alpha}(\mathcal{D})$. Assume there exist two positive constants $\kappa$ and $K$ such that 
\begin{align}\label{aij}
a^{ij}\xi_i\xi_j\geq\kappa|\xi|^2,\ \forall x\in\mathcal{D},\ \xi\in\mathbb{R}^d,
\end{align}
and
\begin{align}\label{aijbibound}
|a^{ij}|^{(0)}_{0,\alpha;\mathcal{D}},|b^i|^{(1)}_{0,\alpha;\mathbb{D}}\leq K.
\end{align}
Then there exists some constant $C=C(d,\alpha,\kappa,K)$ such that 
\begin{align*}
|u|^*_{2,\alpha;\mathcal{D}}\leq C(|u|_{0;\mathcal{D}}+|f|^{2}_{0,\alpha;\mathcal{D}}).
\end{align*}
\end{lem}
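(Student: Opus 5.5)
Since this is the classical interior Schauder estimate \cite[Theorem 6.2]{PDE}, I would follow the standard freezing-of-coefficients argument, organized around the weighted (interior) norms $|\cdot|^*_{k,\alpha;\mathcal{D}}$ introduced above.

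\textbf{Step 1: constant coefficients.} First I would treat $a$ constant and $b\equiv 0$. A linear change of variables $x\mapsto Px$ with $PaP^T=I_d$ reduces $\langle a,\nabla^2 u\rangle_{HS}=f$ to Poisson's equation. The constants of $P$ depend only on $\kappa$ and $K$, using \eqref{aij} and the bound on $|a|$. For Poisson's equation on concentric balls $B_R\subset B_{2R}$, the H\"older estimate for second derivatives of the Newtonian potential gives
\[
[u]_{2,\alpha;B_R}\le C\bigl(R^{-2-\alpha}|u|_{0;B_{2R}}+[f]_{0,\alpha;B_{2R}}\bigr).
\]
This uses the Calder\'on--Zygmund type kernel estimate for $\nabla^2\Gamma$, combined with a harmonic correction controlled by the mean value property. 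The estimate is scale invariant, so it transfers to the weighted seminorm $[u]^*_{2,\alpha;\mathcal{D}}$.

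\textbf{Step 2: freezing the coefficients.} For general $a,b$, fix $x_0,y_0\in\mathcal{D}$ and take $\mu\le\frac12$ small. Set $d=\mu d_{x_0}$ (or $d_{x_0,y_0}$) and $B=B_d(x_0)$. On $B$ rewrite the equation as
\[
\langle a(x_0),\nabla^2u\rangle_{HS}=F:=f+\langle a(x_0)-a(x),\nabla^2u\rangle_{HS}-\langle b,\nabla u\rangle .
\]
Apply Step 1 on $B$. The H\"older seminorm of $F$ on $B$ is estimated via the product inequality \eqref{pdeineq}, together with the hypothesis \eqref{aijbibound}.

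\begin{itemize}
\item The term $(a(x_0)-a(x))\nabla^2u$ contributes a small multiple $C\mu^\alpha[u]^*_{2,\alpha;\mathcal{D}}$, since $|a(x)-a(x_0)|\le K(d/d_{x_0})^\alpha$ in the weighted norm.
\item The lower-order terms contribute $C(\mu)|u|^*_{2;\mathcal{D}}$. The weight $(1)$ on $b$ is exactly what makes the scaling match.
\end{itemize}

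If $|x_0-y_0|\ge \mu d_{x_0}$, the difference quotient of $\nabla^2u$ is instead bounded trivially by $\mu^{-\alpha}[u]^*_{2;\mathcal{D}}$. Taking the supremum over $x_0,y_0$ yields
\[
[u]^*_{2,\alpha;\mathcal{D}}\le C\mu^\alpha[u]^*_{2,\alpha;\mathcal{D}}+C(\mu)\bigl(|u|^*_{2;\mathcal{D}}+|f|^{(2)}_{0,\alpha;\mathcal{D}}\bigr).
\]

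\textbf{Step 3: absorbing.} Finally, choose $\mu$ so that $C\mu^\alpha\le\frac12$. This absorption is legitimate because $[u]^*_{2,\alpha;\mathcal{D}}$ is finite: the hypothesis $u\in C^{2,\alpha}_b$ gives finiteness, and the weights $d_x$ are bounded locally, which can be arranged by exhausting $\mathcal{D}$ by bounded subdomains. Then use the interpolation inequality for interior norms,
\[
|u|^*_{2;\mathcal{D}}\le C(\varepsilon)|u|_{0;\mathcal{D}}+\varepsilon[u]^*_{2,\alpha;\mathcal{D}},
\]
proved by the mean value theorem on balls of radius proportional to $d_x$. Absorbing once more gives the stated estimate $|u|^*_{2,\alpha;\mathcal{D}}\le C(|u|_{0;\mathcal{D}}+|f|^{(2)}_{0,\alpha;\mathcal{D}})$.

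\textbf{Main obstacle.} I expect the hardest step to be Step 1, the H\"older estimate for $\nabla^2$ of the Newtonian potential, which requires the cancellation of the singular kernel. The remainder is bookkeeping of the weights, where the chief danger is ensuring the absorbed quantities are finite.
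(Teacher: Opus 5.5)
Your proposal is correct and is essentially the argument behind \cite[Theorem 6.2]{PDE}, which the paper quotes without proof: the constant-coefficient estimate via the Newtonian potential, freezing the coefficients on balls of radius comparable to $\mu d_x$, the interior interpolation inequality, and absorption after passing to subdomains on which $|u|^*_{2,\alpha}$ is finite. The one thing you should state explicitly is that all of this requires $0<\alpha<1$. Your Step 1 estimate for $\nabla^2$ of the Newtonian potential is false at $\alpha=1$: mollifying $u=r^3\cos(3\theta)\ln r$ in the plane gives smooth solutions whose Laplacians are uniformly Lipschitz (the limit has $\Delta u=6r\cos(3\theta)$), while the Lipschitz seminorms of $\nabla^2 u$ blow up, since in the limit $\nabla^2 u$ is only log-Lipschitz at the origin. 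So the lemma must be read with $\alpha<1$. This is also all that Proposition~\ref{regularity} actually needs, even though the paper writes $\alpha=1$ there.
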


 \subsection{Proof of Proposition \ref{regularity}}
 \begin{proof}[Proof of Proposition \ref{regularity}]
 We work with the H\"older exponent $\alpha=1.$ For any $x\in\mathbb{R}^d$, define
 $$r(x):=\frac{1}{2(1+|x|)}\leq\frac{1}{2},\ \mathcal{B}_x:=B_{r(x)}(x).$$ 
 We shall apply Lemma \ref{Schauder} with $\mathcal{D}=\mathcal{B}_x.$ Under the Assumptions  \ref{A0},\ref{A1},  in order to use Lemma \ref{Schauder},  we only need to verify $|b|^{(1)}_{0,1;\mathbb{B}_x}\leq K.$ In fact,
 \begin{align*}
|b|^{(1)}_{0,1;\mathcal{B}_x}=|b|^{(1)}_{0;\mathcal{B}_x}+[b]^{(1)}_{0,1;\mathcal{B}_x}=\sup\limits_{y\in\mathcal{B}_x}d_y|b(y)|+\sup\limits_{y,z\in\mathcal{B}_x}d^2_{y,z}\frac{|b(y)-b(z)|}{|y-z|}.
 \end{align*}
 According to definition of $d_y,d_{y,z},$ we have for any $y,z\in\mathcal{B}_x,$
 \begin{align*}
 d_y,d_{y,z}\leq d_x=\frac{1}{2(1+|x|)}\leq\frac{1}{2}.
 \end{align*}
 Therefore,
 \begin{align*}
 d_y|b(y)|\leq\frac{L_b(1+|y|)}{2(1+|x|)}\leq\frac{L_b(1+|x|+|y-x|)}{2(1+|x|)}\leq\frac{L_b(1+|x|+\frac{1}2)}{2(1+|x|)}\leq \frac{3}{4}L_b,
 \end{align*}
 and
 \begin{align*}
 \sup\limits_{y,z\in\mathcal{B}_x}d^2_{y,z}\frac{|b(y)-b(z)|}{|y-z|}\leq\frac{1}{4}L_b,
 \end{align*}
 which implies $|b|^{(1)}_{0,1;\mathcal{B}_x}\leq L_b.$

 According to Lemma \ref{Schauder}, we have
 \begin{align*}
 |f|^*_{2,1;\mathcal{B}_x}\leq C(|f|_{0;\mathcal{B}_x}+|h-\nu(h)|^{(2)}_{0,1;\mathcal{B}_x}),
 \end{align*}
 which yields
 \begin{align*}
 d_x|\nabla f(x)|\leq&[f]^{(0)}_{1;\mathcal{B}_x}\leq C(|f|_{0;\mathcal{B}_x}+|h-\nu(h)|^{(2)}_{0,1;\mathcal{B}_x}),\\
 d_x^2|\nabla^2f(x)|_{0;\mathcal{B}_x}\leq&[f]^{(0)}_{2;\mathcal{B}_x}\leq C(|f|_{0;\mathcal{B}_x}+|h-\nu(h)|^{(2)}_{0,1;\mathcal{B}_x}).
 \end{align*}
 Since $d_x=\frac{1}{2(1+|x|)},$ we have 
 \begin{align*}
 |\nabla f|_{0;\mathcal{B}_x}\leq&C(|f|_{0;\mathcal{B}_x}+|h-\nu(h)|^{(2)}_{0,1;\mathcal{B}_x})(1+|x|),\\
 |\nabla^2f|_{0;\mathcal{B}_x}\leq&C(|f|_{0;\mathcal{B}_x}+|h-\nu(h)|^{(2)}_{0,1;\mathcal{B}_x})(1+|x|)^2.\\
 \end{align*}
 According to Lemma \ref{soluofstein},
 $$|f|_{0;\mathcal{B}_x}=\sup\limits_{y\in B_x}|f(y)|\leq C\sup\limits_{y\in B_x}(1+|y|^2)\leq C\sup\limits_{y\in B_x}(1+|y-x|^2+|x|^2)\leq C(1+|x|^2).$$
Since $||h||_{Lip}\leq1,h(0)=0,$ we have $|h(x)|\leq|x|$ and $\nu(h)\leq C.$ Therefore,
\begin{align*}
|h-\nu(h)|^{(2)}_{0,1;\mathcal{B}_x}\leq C(1+|x|).
\end{align*}
Combining these estimates we obtain for any $x\in\mathbb{R}^d,$
\begin{align*}
|\nabla f(x)|&\leq|\nabla f|_{0;\mathcal{B}_x}\leq C(1+|x|^3),\\
||\nabla^2f(x)||&\leq|\nabla^2f|_{0;\mathcal{B}_x}\leq C(1+|x|^4).
\end{align*}
 \end{proof}

\subsection{Proof of Proposition \ref{logf}}
In order to prove the Proposition \ref{logf}, we consider the following equation: for any $z\in\mathbb{R}^d,\rho\in(0,\frac{1}{2}],$
\begin{equation}\label{simpleequation}
\bar{\mathcal{A}}f(x)=\frac{1}{2}\langle\sigma(x)\sigma^T(x),\nabla^2f(x)\rangle_{HS}=\bar{h},x\in B_{\rho}(z).
\end{equation}
The regularity result of Eq.\eqref{simpleequation} is stated in the following lemma, whose proof is standard, see for instance \cite[Section 2]{DLPDE}.
\begin{lem}\label{keyPDE}
Suppose that $\bar{h}$ in Eq.\eqref{simpleequation} is locally Lipschitz, Let $f\in C^2(B_\rho(z))$ be a classical solution to Eq.\eqref{simpleequation}. Then, there exists a positive constant $C,$ independent of $z,$ such that for all $x,y\in B_{\frac{\rho}{4}}(z)$ and $x\neq y,$
\begin{align*}
\frac{||\nabla^2f(x)-\nabla^2f(y)||}{|x-y||\ln|x-y||}\leq C(|\nabla^2f|_{0;B_\rho(z)}+|f|_{0;B_\rho(z)}+|\bar{h}|_{0,1;B_\rho(z)}).
\end{align*}
\end{lem}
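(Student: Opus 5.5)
The plan is a Campanato-type iteration with frozen coefficients; the lemma is essentially the classical statement that a Lipschitz right-hand side and Lipschitz leading coefficients give a log-Lipschitz Hessian. Set $a(x):=\frac12\sigma(x)\sigma^T(x)$. By Assumptions \ref{A0} and \ref{A1}, $a$ is uniformly elliptic with constant $\sigma_0/2$, bounded, and Lipschitz with constant $\le L\|\sigma\|_\infty$. All constants below depend only on these quantities and on $d$, never on $z$. For $x_0\in B_{\rho/2}(z)$ I would freeze coefficients and rewrite \eqref{simpleequation} as
\begin{align*}
\langle a(x_0),\nabla^2 f(x)\rangle_{\rm HS}=F_{x_0}(x):=\bar h(x)+\langle a(x_0)-a(x),\nabla^2 f(x)\rangle_{\rm HS}.
\end{align*}
The key observation is that $F_{x_0}$ need not be Lipschitz, since $\nabla^2 f$ is only bounded. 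It is, however, pointwise Lipschitz at $x_0$:
\begin{align*}
|F_{x_0}(x)-F_{x_0}(x_0)|\le M|x-x_0|,\qquad M:=C\big(|\bar h|_{0,1;B_\rho(z)}+|\nabla^2 f|_{0;B_\rho(z)}\big).
\end{align*}

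\textbf{Step 1 (approximating polynomials).} Take $r_k=2^{-k}r_0$ with $r_0\sim\rho/4$. On each $B_{r_k}(x_0)$, split $f=v_k+w_k$. Here $v_k$ solves the constant-coefficient equation $\langle a(x_0),\nabla^2 v_k\rangle_{\rm HS}=F_{x_0}(x_0)$ with $v_k=f$ on $\partial B_{r_k}(x_0)$. The remainder $w_k$ has zero boundary data and right-hand side bounded by $Mr_k$, so the maximum principle gives $|w_k|\le CMr_k^3$. The function $v_k$, minus a quadratic, is $a(x_0)$-harmonic, so interior estimates give a quadratic polynomial $P_k$ with
\begin{align*}
\sup_{B_{r_{k+1}}(x_0)}|f-P_k|\le CMr_k^3+\text{(decay of the harmonic part)}.
\end{align*}
Iterating in the usual Campanato way should yield
\begin{align*}
\|\nabla^2P_k-\nabla^2P_{k+1}\|\le CMr_k+C\rho^{-2}\big(|f|_0+\rho^2|\nabla^2 f|_0\big)2^{-k},
\end{align*}
and hence $\nabla^2P_k\to\nabla^2 f(x_0)$ with $\|\nabla^2 f(x_0)-\nabla^2P_k\|\le C'r_k$, where $C'$ is the right-hand side of the lemma.

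\textbf{Step 2 (comparing $x$ and $y$).} For $x,y\in B_{\rho/4}(z)$ with $\delta:=|x-y|$ small, choose $k$ with $r_k\sim 2\delta$ and work on $B_{r_k}(x)\ni y$. The point is to bound $\nabla^2(f-P_k)(y)$. In the frozen equation, $u:=f-P_k$ satisfies $\langle a(x),\nabla^2u\rangle_{\rm HS}=G$ with $\|G\|_{L^\infty(B_{r_k})}\le CMr_k$ and $|u|\le C'r_k^3$. An $L^\infty$ bound on $\nabla^2 u$ from an $L^\infty$ right-hand side fails in general, and this is exactly where the logarithm is produced.

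I would handle this by running the same iteration of Step 1 centred at $y$, whose approximating Hessians agree with those at $x$ up to scale $\sim\delta$. The pointwise-Lipschitz bound at $y$ holds relative to $F_y$, not $F_x$. Converting between them costs $\|a(x)-a(y)\|\,\|\nabla^2 f\|_0\le CM\delta$ on each of the roughly $|\ln\delta|$ dyadic scales between $\delta$ and $\rho$. Summing these contributions gives
\begin{align*}
\|\nabla^2 f(x)-\nabla^2 f(y)\|\le C'\,\delta\,(1+|\ln\delta|).
\end{align*}
For $\delta$ bounded below, the estimate is trivial from $2|\nabla^2 f|_0$.

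\textbf{Main obstacle.} The delicate step is this bookkeeping across scales: showing that the mismatch between the two frozen problems contributes only $O(\delta)$ per dyadic scale, and that the number of relevant scales is $O(|\ln\delta|)$, rather than producing a worse loss. If the direct iteration proves awkward, I would instead invoke the Dini mean-oscillation $C^2$ estimates of \cite{DLPDE}. One applies them to $\langle a,\nabla^2 f\rangle_{\rm HS}=\bar h$, where the coefficients and data have modulus $\omega(r)\lesssim r$. Their modulus bound for $\nabla^2 f$ has the form $\int_0^{\delta}\omega(t)/t\,dt+\delta\int_\delta^{\rho}\omega(t)/t^2\,dt$, which for $\omega(t)=Mt$ is exactly $CM\delta(1+|\ln\delta|)$. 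The lower-order term involving $|f|_0$ and $|\nabla^2 f|_0$ comes from the starting scale.
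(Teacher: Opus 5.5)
Your fallback, quoting a sharp Dini-type $C^2$ estimate with modulus $\int_0^\delta\omega(t)/t\,dt+\delta\int_\delta^\rho\omega(t)/t^2\,dt$, is essentially what the paper does. It gives no argument and refers to \cite[Section 2]{DLPDE} as standard. If you go that way, check that the result you quote really has this sharp modulus: some Dini-mean-oscillation versions only yield a H\"older modulus when $\omega(t)=t$.

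Your self-contained argument, however, has a genuine gap in Step 2. You claim that $u=f-P_k$ obeys $|u|\le C'r_k^3$ on $B_{r_k}(x)$ for a quadratic $P_k$. This is pointwise $C^{2,1}$ at $x$, and it is false at this endpoint. Take $d=2$, $\sigma\equiv\sqrt2\,I_2$ in \eqref{simpleequation}. Then $f=r^3\ln r\,\cos3\theta$ solves $\Delta f=6r\cos3\theta$, whose right-hand side is Lipschitz, yet $\inf_P\sup_{B_r(0)}|f-P|\ge c\,r^3|\ln r|$ over quadratics $P$. For the same reason, the ``usual Campanato'' sup-norm iteration in Step 1 cannot close at exponent $1$. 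Your Step 1 Hessian bound is still true, but only under three conditions:
\begin{itemize}
\item $P_k$ is the quadratic Taylor polynomial of $v_k$ at $x_0$;
\item the increment bound comes from $\sup_{B_{r_{k+1}}(x_0)}|v_k-v_{k+1}|\le\sup|w_k|\le CMr_k^3$;
\item the limit is identified with $\nabla^2f(x_0)$ by comparing $v_k$ with the second-order Taylor polynomial of $f$ at $x_0$ (which solves the frozen equation exactly) and using $f\in C^2$.
\end{itemize}
More seriously, your mechanism for the logarithm is a cost $\|a(x)-a(y)\|\,|\nabla^2f|_0$ per scale for switching from $F_x$ to $F_y$. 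That cost vanishes when $a$ is constant. The example shows the logarithmic loss is nevertheless present for the Laplacian: there the approximating Hessians at $x=0$ and at $y$ drift apart by about $\delta$ per dyadic scale. Applied to the Laplacian, your accounting would give a Lipschitz bound on $\nabla^2 f$, which the example rules out.

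The missing idea is to keep the full approximants $v_j$ (centred at $x$, frozen at $a(x)$), not their quadratic parts, and to track their third derivatives.
\begin{itemize}
\item Since $v_j-v_{j+1}$ is $a(x)$-harmonic on $B_{r_{j+1}}(x)$ with sup at most $CMr_j^3$, one gets $|\nabla^3(v_j-v_{j+1})|\le CM$ on $B_{3r_k/4}(x)$ for $j<k$. Hence $|\nabla^3 v_k|\le C\rho^{-3}(|f|_0+\rho^2|\bar h|_0)+CMk$ there.
\item Take $r_{k+2}\le\delta<r_{k+1}$, so $k\le C(1+|\ln\delta|)$. Split $\nabla^2f(x)-\nabla^2f(y)$ as $[\nabla^2f(x)-\nabla^2v_k(x)]+[\nabla^2v_k(x)-\nabla^2v_k(y)]+[\nabla^2v_k(y)-\nabla^2f(y)]$.
\item The first bracket is $O(M\delta)$ by Step 1.
\item The second bracket is at most $\delta\sup|\nabla^3v_k|=O(\delta(1+|\ln\delta|))$. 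This, i.e.\ $\delta\sum_{j\le k}\omega(r_j)/r_j$ with $\omega(r)=Mr$, is where the logarithm actually comes from.
\item The variable coefficients enter only once, in the third bracket. Let $\tilde v$ be the approximant on $B_{c\delta}(y)$ frozen at $a(y)$. Then $\tilde v-v_k$ has boundary values $w_k=O(M\delta^3)$ and solves $\langle a(y),\nabla^2(\tilde v-v_k)\rangle_{\rm HS}=\bar h(y)-\bar h(x)+\langle a(x)-a(y),\nabla^2v_k\rangle_{\rm HS}$. This right-hand side has size $O(\delta)$ and Lipschitz constant $O(\delta(1+|\ln\delta|))$. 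The scaled interior Schauder estimate then gives $\|\nabla^2(\tilde v-v_k)(y)\|=O(\delta)$, while $\|\nabla^2\tilde v(y)-\nabla^2f(y)\|=O(M\delta)$ by Step 1 centred at $y$.
\end{itemize}
The constants depend on $\rho$ (e.g.\ through $\rho^{-3}|f|_0$). This is harmless, since the lemma only claims independence of $z$ and $\rho=\frac12$ is used afterwards.
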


\begin{proof}[Proof of Proposition \ref{logf}]
    Recall the Stein equation:
    \begin{align*}
       \mathcal{A}f(x)= \langle b(x),\nabla f(x)\rangle+\frac{1}{2}\langle \sigma(x)\sigma^T(x),\nabla^2f(x)\rangle_{HS}=h(x)-\nu(h).
    \end{align*}
    Then we have
    \begin{align*}
        \bar{\mathcal{A}}f(x)=h(x)-\nu(h)-\langle b(x),\nabla f(x)\rangle=:\bar{h}(x),
    \end{align*}

Applying Lemma \ref{keyPDE} with this \(\bar{h}\) , it yields that there exists a positive constant $C,$ independent of $z$ such that, for any $x,y\in B_{\frac{\rho}{4}}(z),x\neq y,$
\begin{align*}
    &\frac{||\nabla^2f(x)-\nabla^2f(y)||}{|x-y||\ln|x-y||}\leq C(|\nabla^2f|_{0;B_\rho(z)}+|f|_{0;B_\rho(z)}+|\bar{h}|_{0,1;B_\rho(z)})\\
    \leq&C(|\nabla^2f|_{0;B_\rho(z)}+|f|_{0;B_{\rho}(z)}+|h|_{0,1;B_{\rho}(z)}+|b|_{0,1;B_\rho(z)}|\nabla f|_{0,1;B_\rho(z)}+1),
\end{align*}
where the last inequality follows from \eqref{pdeineq}.

For any $z\in\mathbb{R}^d,$ note that
\begin{align*}
    |b|_{0,1;B_\rho(z)}=\sup\limits_{y\in B_\rho(z)}|b(y)|\leq L_b(1+\sup\limits_{y\in B_\rho(z)}|y|)\leq C(1+|z|),
\end{align*}
and
\begin{align*}
    |h|_{0,1;B_\rho(z)}\leq\rho\leq\frac{1}{2}.
\end{align*}
Consequently,
\begin{align*}
    \frac{||\nabla^2f(x)-\nabla^2f(y)||}{|x-y||\ln|x-y||}\leq C(1+|z|)(|\nabla^2f|_{0;B_\rho(z)}+|f|_{0;B_{\rho}(z)}+|\nabla f|_{0,1;B_\rho(z)}+1).
\end{align*}

From Lemma \ref{soluofstein} we know \(|f(y)|\le C(1+|y|^2)\); therefore
\[
|f|_{0;B_\rho(z)}\le C\bigl(1+\sup_{y\in B_\rho(z)}|y|^2\bigr)
                 \le C(1+|z|^2).
\]
By Proposition \ref{regularity}, \(\|\nabla^2f(y)\|\le C(1+|y|^4)\), so
\[
|\nabla^2f|_{0;B_\rho(z)}\le C\bigl(1+\sup_{y\in B_\rho(z)}|y|^4\bigr)
                          \le C(1+|z|^4).
\]
Finally, using the mean-value theorem, we have
\[
|\nabla f|_{0,1;B_\rho(z)}
=\sup_{\substack{x,y\in B_\rho(z)\\ x\neq y}}
  \frac{|\nabla f(x)-\nabla f(y)|}{|x-y|}
\le |\nabla^2f|_{0;B_\rho(z)}
\le C(1+|z|^4).
\]

Combining these above estimates together, we obtain that for  $z\in\mathbb{R}^d, \rho\in(0,\frac{1}{2}]$ and  any $x,y\in B_{\frac{\rho}{4}}(z),$
\begin{align*}
    \frac{||\nabla^2f(x)-\nabla^2f(y)||}{|x-y||\ln|x-y||}\leq C(1+|z|^5).
\end{align*}
The constant \(C\) does not depend on \(z\).  
In particular, for any fixed \(x\in\mathbb{R}^d\) we may choose \(z=x\) and \(\rho=\frac12\).  
Then \(B_{1/8}(x)\subset B_{\rho/4}(x)\), and for every \(y\) with \(0<|y-x|\le\frac18\) we obtain
\[
\frac{\|\nabla^2f(x)-\nabla^2f(y)\|}{|x-y|\,|\ln|x-y||}\le C(1+|x|^5).
\]
Taking the supremum over such \(y\) completes the proof of Proposition \ref{logf}.

\end{proof}

\bibliographystyle{plain}

\begin{thebibliography}{1}

\bibitem{fbm1} 
B. Boufoussi and C. A. Tudor, \textit{Kramers-Smoluchowski approximation for stochastic evolution equations with FBM}, Rev. Roumaine Math. Pures Appl. \textbf{50} (2005), no. 2, 125--136.

\bibitem{DNB} 
N. Breimhorst, \textit{Smoluchowski-Kramers Approximation for Stochastic Differential Equations with non-Lipschitzian coefficients}, PhD Thesis, Fakult\"at f\"ur Mathematik, Universit\"at Bielefeld, 2009.

\bibitem{SPDE1} 
S. Cerrai and M. Freidlin, \textit{Smoluchowski-Kramers approximation for a general class of SPDEs}, J. Evol. Equ. \textbf{6} (2006), no. 4, 657--689.

\bibitem{SPDE2} 
S. Cerrai and M. Freidlin, \textit{On the Smoluchowski-Kramers approximation for a system with an infinite number of degrees of freedom}, Probab. Theory Related Fields \textbf{135} (2006), no. 3, 363--394.

\bibitem{SMF} 
S. Cerrai, M. Freidlin and M. Salins, \textit{On the Smoluchowski-Kramers approximation for SPDEs and its interplay with large deviations and long time behavior}, Discrete Contin. Dyn. Syst. \textbf{37} (2017), no. 1, 33--76.

\bibitem{JFA} 
S. Cerrai and N. Glatt-Holtz, \textit{On the convergence of stationary solutions in the Smoluchowski-Kramers approximation of infinite dimensional systems}, J. Funct. Anal. \textbf{278} (2020), no. 8, 108421.

\bibitem{LDP1} 
S. Cerrai and M. Salins, \textit{Smoluchowski-Kramers approximation and large deviations for infinite dimensional gradient systems}, Asymptot. Anal. \textbf{88} (2014), no. 4, 201--215.


\bibitem{stein0} 
S. Chatterjee and E. Meckes, \textit{Multivariate normal approximation using exchangeable pairs}, ALEA Lat. Am. J. Probab. Math. Stat. \textbf{4} (2008), 257--283.

\bibitem{C} 
Z. Chen and M. Freidlin, \textit{Smoluchowski-Kramers approximation and exit problems}, Stoch. Dyn. \textbf{5} (2005), no. 4, 569--585.

\bibitem{PDE} 
D. Gilbarg and N. S. Trudinger, \textit{Elliptic Partial Differential Equations of Second Order}, Springer, Berlin, 1977.

\bibitem{DeTe2023}
P. S. Dey and G. Terlov, \textit{Stein's method for conditional central limit theorem}, Ann. Probab. \textbf{51} (2023), no. 2, 723--773.

\bibitem{DLPDE} 
K. Du and J. Liu, \textit{On the Cauchy problem for stochastic parabolic equations in H\"older spaces}, Trans. Amer. Math. Soc. \textbf{371} (2019), no. 4, 2643--2664.

\bibitem{EGZ2019}
A. Eberle, A. Guillin and R. Zimmer, \textit{Couplings and quantitative contraction rates for Langevin dynamics}, Ann. Probab. \textbf{47} (2019), 1982--2010.

\bibitem{FaKo2021}
X. Fang and Y. Koike, \textit{High-dimensional central limit theorems by Stein's method}, Ann. Appl. Probab. \textbf{31} (2021), no. 4, 1660--1686.

\bibitem{FSX} 
X. Fang, Q.-M. Shao and L. Xu, \textit{Multivariate approximations in Wasserstein distance by Stein's method and Bismut's formula}, Probab. Theory Related Fields \textbf{174} (2019), no. 3-4, 945--979.

\bibitem{F} 
M. Freidlin, \textit{Some remarks on the Smoluchowski-Kramers approximation}, J. Stat. Phys. \textbf{117} (2004), no. 3-4, 617--634.

\bibitem{stein1} 
L. Goldstein and Y. Rinott, \textit{Multivariate normal approximations by Stein's method and size bias couplings}, J. Appl. Probab. \textbf{33} (1996), no. 1, 1--17.

\bibitem{GLWZ} A. Guillin, W. Liu, L. Wu and C. Zhang, \textit{The kinetic Fokker-Planck equation with mean field interaction},  J. Math. Pures Appl. \textbf{150} (2021), 1-23.

\bibitem{GMZ2021}
A. Guillin and P. Monmarch\'e, \textit{Uniform long-time and propagation of chaos estimates for mean field kinetic particles in non-convex landscapes}, J. Stat. Phys. \textbf{185} (2021), Article 15.

\bibitem{GWXZ2026}
A. Guillin, Y. Wang, L. Xu and H. Yang, \textit{Error estimates between SGD with momentum and underdamped Langevin diffusion}, Ann. Appl. Probab., (2026+), in press.  


\bibitem{GuWa2012} 
A. Guillin and F.-Y. Wang, \textit{Degenerate Fokker--Planck equations: Bismut formula, gradient estimate and Harnack inequality}, J. Differential Equations \textbf{253} (2012), 20--40.


\bibitem{HDC} 
Z. He, J. Duan and X. Cheng, \textit{A parameter estimator based on Smoluchowski-Kramers approximation}, Appl. Math. Lett. \textbf{90} (2019), 54--60.

\bibitem{HMVW} 
S. Hottovy, A. McDaniel, G. Volpe and J. Wehr, \textit{The Smoluchowski-Kramers limit of stochastic differential equations with arbitrary state-dependent friction}, Comm. Math. Phys. \textbf{336} (2015), no. 3, 1259--1283.

\bibitem{fbm3} 
W. Liu, B. Pei and Q. Yu, \textit{Rate of convergence for the Smoluchowski-Kramers approximation for distribution-dependent SDEs driven by fractional Brownian motions}, Stoch. Dyn. \textbf{24} (2024), no. 1, Paper No. 2450002.

\bibitem{stein2} 
L. Mackey and J. Gorham, \textit{Multivariate Stein factors for a class of strongly log-concave distributions}, Electron. Commun. Probab. \textbf{21} (2016), Paper No. 56.

\bibitem{MT3} 
S. P. Meyn and R. L. Tweedie, \textit{Stability of Markovian processes. III. Foster-Lyapunov criteria for continuous-time processes}, Adv. in Appl. Probab. \textbf{25} (1993), no. 3, 518--548.

\bibitem{NK1} 
K. Narita, \textit{Asymptotic behavior of velocity process in the Smoluchowski-Kramers approximation for stochastic differential equations}, Adv. in Appl. Probab. \textbf{23} (1991), no. 2, 317--326.

\bibitem{NK2} 
K. Narita, \textit{The Smoluchowski-Kramers approximation for the stochastic Li\'enard equation by mean-field}, Adv. in Appl. Probab. \textbf{23} (1991), no. 2, 303--316.

\bibitem{NK3} 
K. Narita, \textit{Asymptotic behavior of fluctuation and deviation from limit system in the Smoluchowski-Kramers approximation for SDE}, Yokohama Math. J. \textbf{42} (1994), no. 1, 41--76.

\bibitem{N} 
E. Nelson, \textit{Dynamical Theories of Brownian Motion}, Princeton University Press, Princeton, 1967.

\bibitem{SPDE4} 
H. D. Nguyen, \textit{The small-mass limit and white-noise limit of an infinite dimensional generalized Langevin equation}, J. Stat. Phys. \textbf{173} (2018), no. 2, 411--437.

\bibitem{SPDE6} 
H. D. Nguyen, \textit{The small mass limit for long time statistics of a stochastic nonlinear damped wave equation}, J. Differential Equations \textbf{371} (2023), 481--548.

\bibitem{SPDE5} 
M. Salins, \textit{Smoluchowski-Kramers approximation for the damped stochastic wave equation with multiplicative noise in any spatial dimension}, Stoch. Partial Differ. Equ. Anal. Comput. \textbf{7} (2019), no. 1, 86--122.

\bibitem{stein4} 
Q.-M. Shao and Z.-S. Zhang, \textit{Identifying the limiting distribution by a general approach of Stein's method}, Sci. China Math. \textbf{59} (2016), no. 12, 2379--2392.

\bibitem{fbm2} 
T. C. Son, \textit{The rate of convergence for the Smoluchowski-Kramers approximation for stochastic differential equations with FBM}, J. Stat. Phys. \textbf{181} (2020), no. 5, 1730--1745.

\bibitem{BRJ1} 
T. C. Son, D. Q. Le and M. H. Duong, \textit{Rate of convergence in the Smoluchowski-Kramers approximation for mean-field stochastic differential equations}, Potential Anal. \textbf{60} (2024), no. 3, 1031--1065.

\bibitem{Stein1972}
C. Stein, \textit{A bound for the error in the normal approximation to the distribution of a sum of dependent random variables}, in: Proc. Sixth Berkeley Symp. Math. Statist. Probab., Vol. 2, Univ. of California Press, 1972, pp. 583--602.

\bibitem{stein} 
C. Stein, \textit{Approximate Computation of Expectations}, Lecture Notes-Monograph Series 7, Institute of Mathematical Statistics, Hayward, CA, 1986.

\bibitem{BRJ} 
N. V. Tan and N. T. Dung, \textit{A Berry-Esseen bound in the Smoluchowski-Kramers approximation}, J. Stat. Phys. \textbf{179} (2020), no. 4, 871--884.

\bibitem{Wang2013}
F.-c出                                                 Y. Wang, \textit{Harnack Inequalities for Stochastic Partial Differential Equations}, Springer Monographs in Mathematics, Springer, New York, 2013.

\bibitem{WaZh13}
F.-Y. Wang and X. Zhang, \textit{Derivative formula and applications for degenerate diffusion semigroups}, J. Math. Pures Appl. \textbf{99} (2013), 726--740.

\bibitem{Wu2001} 
L. Wu, \textit{Large and moderate deviations and exponential convergence for stochastic damping Hamiltonian systems}, Stochastic Process. Appl. \textbf{91} (2001), 205--238.

\bibitem{XY} 
L. Xie and L. Yang, \textit{The Smoluchowski-Kramers limits of stochastic differential equations with irregular coefficients}, Stochastic Process. Appl. \textbf{150} (2022), 91--115.

\end{thebibliography}

\end{document}